\newcommand\CC{{\mathbb C}}
\newcommand\CCt{{\mathbb C}\{\!\!\{t\}\!\!\}}
\newcommand\RR{{\mathbb R}}
\newcommand\PP{{\mathbb P}}
\newcommand\TT{{\mathbb T}}
\newcommand\TP{{\mathbb{TP}}}
\newcommand\smallSetOf[2]{\{{#1}\,|\,{#2}\}}
\DeclareMathOperator{\val}{val}
\DeclareMathOperator{\Gr}{Gr} 
\DeclareMathOperator{\rank}{rk}
\DeclareMathOperator{\trop}{trop}
\DeclareMathOperator{\Dil}{D}
\DeclareMathOperator{\supp}{supp}
\DeclareMathOperator{\Dr}{Dr}
\DeclareMathOperator{\rowspace}{rowspace}
\newcommand\sgn[3]{|#1\cap [#3]|+|#2\cap [#3]|}
\theoremstyle{plain}
    \newtheorem{theorem}{Theorem}
    \newtheorem{corollary}[theorem]{Corollary}
    \newtheorem{lemma}[theorem]{Lemma}
    \newtheorem{proposition}[theorem]{Proposition}
\theoremstyle{definition}
    \newtheorem{remark}[theorem]{Remark}
    \newtheorem{example}[theorem]{Example}
    \newtheorem{definition}[theorem]{Definition}
    \newtheorem{question}{Question}
\title[Moduli spaces of codimension-one subspaces in a linear variety]{Moduli spaces of codimension-one subspaces in\\ a linear variety and their tropicalization}
\author{Philipp Jell \and Hannah Markwig \and Felipe Rinc\'on \and Benjamin Schr\"oter}
\address[Philipp Jell]{
Fakult\"at Mathematik,  
Universit\"at Regensburg,
93040 Regensburg
Germany
}
\email{philipp.jell@ur.de}
\address[Hannah Markwig]{
  Eberhard Karls Universit\"at T\"ubingen,
  Fachbereich Mathematik,
  Auf der Morgenstelle 10,
  72076 T\"ubingen
}
\email{hannah@math.uni-tuebingen.de}
\address[Felipe Rinc\'on]{
  School of Mathematical Sciences, Queen Mary University of London, London, Great Britain.
}
\email{ f.rincon@qmul.ac.uk}
\address[Benjamin Schr\"oter]{
  Department of Mathematics, KTH Royal Institute of Technology, Stockholm, Sweden
}
\email{schrot@kth.se}
\begin{document}

\begin{abstract}
	We study the moduli space of $d$-dimensional linear subspaces contained in a fixed $(d+1)$-dimensional linear variety $X$, and its tropicalization. We prove that these moduli spaces are linear subspaces themselves, and thus their tropicalization is completely determined by their associated (valuated) matroids. We show that these matroids can be interpreted as the matroid of lines of the hyperplane arrangement corresponding to $X$, and generically are equal to a Dilworth truncation of the free matroid. In this way, we can describe combinatorially tropicalized Fano schemes 
	and tropicalizations of moduli spaces of stable maps of degree $1$ to a plane.
\end{abstract}

\subjclass{14T15, 14T20, 05B35, 14M15}
\keywords {Moduli spaces, Grassmannian, Matroid, Flag variety, Tropicalization, Dilworth truncation}

\maketitle

\section{Introduction}
\noindent The study of moduli spaces such as Grassmannians, flag varieties, and moduli spaces of curves has a long tradition in algebraic geometry. These moduli spaces and their tropical analogues are central in the active field of tropical geometry \cite{MS15}. In this article we study the geometry and combinatorics of the moduli space $L(X)$ of codimension-$1$ subspaces of a linear variety $X\subset\mathbb{P}^{n-1}$. We then use this to describe its tropicalization $\trop(L(X))$, whose points parametrize tropicalized linear spaces in $\mathcal X = \trop(X)$ that can be obtained as tropicalizations of codimension-$1$ linear subspaces of $X$. 

If $X$ is a $(d+1)$-dimensional linear space then $L(X)$ is a subvariety of the Grassmannian $\Gr(d,n) \subset \PP^{\binom{n}{d}-1}$ cut out by certain linear incidence relations (see Section \ref{sec:preliminaries}). We show in Theorem~\ref{thm-subspace} that $L(X)$ itself is a linear subspace of $\PP^{\binom{n}{d}-1}$ and we present explicit descriptions of it. Motivated by tropical geometry, we ask the following question.
\begin{question}\label{ques-mapmatroid}
Given a linear variety $X \subset \mathbb{P}^{n-1}$, what is the matroid corresponding to the subspace $L(X) \subset \PP^{\binom{n}{d}-1}$ of codimension-$1$ subspaces of $X$?
\end{question}

Theorem \ref{thm-main} answers this question by giving several characterizations of this matroid. In particular, we show that the matroid of $L(X)$ is the matroid of lines of the hyperplane arrangement corresponding to $X$ (see Section \ref{sec-matroid of lines}). Generically, this matroid is equal to a Dilworth truncation of the free matroid, as we prove in Section \ref{sec-genericity}.

A motivation for Question \ref{ques-mapmatroid} arises from the following observation when $X$ is $2$-dimensional. Classically, the dual projective plane $(\PP^2)^\vee$ parametrizes lines in $\PP^2$, offering the opportunity for a rich theory of duality for classical plane curves.
In tropical geometry, there are many models of a plane: all $2$-dimensional tropical linear spaces; see Section~\ref{chap-tropical} for further details. The basic model of a tropical plane, $\RR^2$, allows for a well-known concept of duality: the space of non-degenerate tropical lines in $\RR^2$ can be identified with $\RR^2$ itself, by sending a line to (minus) the coordinates of its vertex. (If we choose the minus sign, also point-line incidences are respected under this tropical map to the dual plane.)
The concept of dual tropical curves and tropical point-line geometry has been investigated in this setting in \cite{IL18, BJLR17}.

The theory of embedded planar tropical curves makes it evident that the basic model $\RR^2$ is not always sufficient to study certain geometric features on the tropical side -- sometimes we have to take other tropical planes into account, see e.g.\ \cite{BJMS14, HMRT18}.
If we pick another tropical model of the plane, the concept of duality might be different: for instance, the space of tropical lines in the standard tropical plane $\mathcal{X}$ in $\RR^3$ cannot naturally be identified with $\mathcal{X}$ itself (see Example \ref{ex-standardplaneR3}). Thus, self-duality does not necessarily hold for tropical planes.
This observation motivates us to ask the following question (also studied in \cite{La18}:

\begin{question}Given a tropicalized plane $\mathcal{X} \subset \RR^{n-1}$, what is the space of tropicalized lines in $\mathcal{X}$? More generally, given a tropicalized linear space $\mathcal{X} \subset \RR^{n-1}$, what is the space of tropicalized codimension-$1$ spaces in $\mathcal{X}$?\label{ques-linesinplanes}\end{question}

Question \ref{ques-linesinplanes} is intimately related to Question \ref{ques-mapmatroid}, as we now explain (see Section \ref{chap-tropical} for further details).
Since the locus $L(X)$ of codimension-$1$ subspaces of $X$ is a linear space in $\PP^{\binom{n}{d}-1}$, its tropicalization depends exclusively on its (valuated) matroid.
In the case where $X$ has constant coefficients, for instance, so does $L(X)$, and thus its tropicalization is equal to the Bergman fan of its associated matroid. 
Our results from Section~\ref{chap-matroids} allow us to characterize the tropicalization of $L(X)$ as the Bergman fan of a Dilworth truncation whenever $X$ is generic. However, our tools also apply to the non-constant coefficient case -- see Remark~\ref{rem-nonconstantcoefficients} and Example~\ref{ex:valuated}. 

The tropicalized moduli spaces that we investigate are closely related to other moduli spaces studied in the literature. For example, in some cases it can be viewed as a special case of a tropicalized moduli space of stable maps (of degree one), or as a tropicalized Fano scheme. Moreover, the study of the tropicalizations of $L(X)$  can be useful in the investigation of tropicalizations of flag varieties by allowing the space $X$ to vary (see Subsection \ref{subsec-flag}). 
We explain these relations in further detail in Section \ref{sec-background}.

In summary, this paper is organized as follows. The main body is divided into two parts, Section \ref{chap-matroids} and \ref{chap-tropical}, which do not require any knowledge about tropical geometry. 

The first part, Section \ref{chap-matroids}, is focused on answering Question \ref{ques-mapmatroid} and deals with the matroid of the locus $L(X)$ of codimension-one subspaces of $X$. In Subsection \ref{sec:preliminaries}, we 
recall facts about Pl\"ucker coordinates, Grassmannians, and incidence relations. 
In Subsection \ref{sec-subspaceofcodimonespaces}, we show in Theorem \ref{thm-subspace} that the moduli space $L(X)$ is itself a linear subspace. The short argument we give there builds merely on linear algebra. In Theorem \ref{thm-incidenceimpliesPluecker} and its proof in the Appendix, we present a deeper algebraic argument that shows how the ideals of the incidence relations and the Pl\"ucker relations are related. 
In Subsection \ref{sec-matroid of lines}, we answer Question \ref{ques-mapmatroid} in Theorem \ref{thm-main}, providing different perspectives on the matroid of $L(X)$. In Subsection \ref{sec-genericity} we show that, in the case where $X$ is generic, the matroid of $L(X)$ can be described as a Dilworth truncation of the free matroid.

The second part, Section \ref{chap-tropical}, deals with the tropicalization of the moduli space $L(X)$ of codimension-$1$ subspaces in $X$. In Subsection \ref{subsec-tropicalprelim}, we introduce preliminaries on tropicalizations and tropical linear spaces. In Subsection \ref{subsec-tropicalization}, we build on Theorem \ref{thm-main} to study tropicalizations of spaces of subspaces, thus giving a partial answer to Question \ref{ques-linesinplanes}. We present various examples enlightening the different perspectives. In Subsection \ref{sec-background}, we discuss connections to other tropicalizations of moduli spaces which have been studied in the literature.

Finally, as mentioned before, in the Appendix we prove a special relation between the ideals generated by the Pl\"ucker relations and the codimension-$1$ incidence relations, which is formulated in Theorem~\ref{thm-incidenceimpliesPluecker}.

 \subsection{Acknowledgements}
 This project was motivated by an inspiring talk by Dhruv Ranganathan -- many thanks for the talk.
 We would like to thank Lara Bossinger, Ghislain Fourier, Marvin Hahn, Sara Lamboglia, and Dhruv Ranganathan for helpful discussions.
 We also want to thank two anonymous referees for their very careful reading and their comments which helped us improve an earlier version of this paper, in particular for pointing out a simplified proof of the equality of matroids considered below.
The second author gratefully acknowledges support by DFG-collaborative research center TRR 195, Project-ID 286237555.
The third author thankfully acknowledges partial support by the Research Council of Norway grant 239968/F20.
Part of this work was completed during the the Mittag-Leffler program \emph{Tropical geometry, amoebas and polytopes} in Spring 2018, where all authors were in residence. We would like to thank the institute for their hospitality and excellent working conditions.

 \subsection{Index of notation} For convenience of the reader we provide a table of notation. 
\[
\begin{array}{cl}

X & \mbox{The $(d+1)$-dimensional linear space in which we consider subspaces}\\
q_I & \mbox{The Pl\"ucker coordinates of the linear space $X$}\\
R_{A,B} & \mbox{The Pl\"ucker relation for $A \in \binom{[n]}{m-1}$ and $B \in \binom{[n]}{m+1}$}\\
\Gr(m,n) & \mbox{The Grassmannian parametrizing $m$-dimensional linear spaces in $K^n$}\\
L(X) & \mbox{The moduli space of $d$-dimensional subspaces contained in $X$}\\
P_J& \mbox{Variables for the Pl\"ucker coordinates of a linear space $Y$ contained in $X$}\\
I_{A,B} & \mbox{The incidence relation for $A \in \binom{[n]}{d-1}$ and $B \in \binom{[n]}{d+2}$, see Equation (\ref{def-incidence})}\\
W& \mbox{A matrix whose rowspace is $X$}\\
V& \mbox{A matrix with certain minors of $W$ as entries, see Equation (\ref{defV})}\\
U& \mbox{The matrix of incidence relations, see Equation (\ref{defU})}\\
\mathcal{X}=\trop(X) & \mbox{The tropicalization of $X$, see Definition \ref{def-trop}}
\end{array}
\]

\section{The locus of codimension-1 subspaces and its matroid} \label{chap-matroids}

\subsection{Preliminaries}\label{sec:preliminaries}\noindent
We start by recalling some basic facts about Pl\"ucker coordinates, Grassmannians, and incidence relations. 
A more detailed exposition about these topics can be found in, for example, \cite{GriffithsHarris}. 

Let $m \leq n$ be non-negative integers. We will use the notation $[n]\coloneqq\{1,\dotsc,n\}$, $2^{[n]}$ for the power set of $[n]$, and $\binom{[n]}{m}\coloneqq\{A \in 2^{[n]} \mid \left|A\right| = m\}$. If $A \subset [n]$ and $i \in [n]$, we also abbreviate 
$A \cup i := A \cup \{i\}$ and $A \setminus i := A \setminus \{i\}$.

Fix an algebraically closed field $K$. The \emph{Grassmannian} $\Gr(m,n)$ is a projective algebraic variety parametrizing all $m$-dimensional linear subspaces of $K^n$, or equivalently, all $(m-1)$-dimensional projective subspaces of $\mathbb{P}^{n-1}$. It can be embedded into projective space $\PP^{\binom{n}{m}-1}$ via its \emph{Pl\"ucker embedding}: An $m$-dimensional subspace $L \subset K^n$ presented as the rowspace of a full-rank matrix $M \in K^{m \times n}$ corresponds to its vector
of \emph{Pl\"ucker coordinates} 
\[
\big(\det(M^I)\big)_I\in \PP^{\binom{n}{m}-1},
\]
where $I$ varies over all $m$-subsets of $[n]$ and $M^I$ denotes the maximal square submatrix of $M$ consisting of the columns indexed by $I$.

Consider the polynomial ring $S = K[P_I: I \in \tbinom{[n]}{m}]$. 
Under its Pl\"ucker embedding, the Grassmannian $\Gr(m,n) \subset \PP^{\binom{n}{m}-1}$ is 
the zero locus of the homogeneous prime ideal of $S$,
called the \emph{Pl\"ucker ideal},
generated by the quadratic Pl\"ucker relations: 
For any $A \in \binom{[n]}{m-1}$ and $B \in \binom{[n]}{m+1}$, the \emph{Pl\"ucker relation} $R_{A,B} \in S$ is
\[ R_{A,B} \coloneqq \sum_{i \in B \setminus A} (-1)^{|[i]\cap A| + |[i] \cap B|} P_{A \cup i} \, P_{B \setminus i}\enspace. \]

Now, let $d \leq e$ be nonnegative integers, and suppose $X$ is a fixed $e$-dimensional linear subspace of $K^n$, with Pl\"ucker coordinates $q_I$ for $I \in \tbinom{[n]}{e}$. A $d$-dimensional linear subspace $L$ of $K^n$ with Pl\"ucker coordinates $P_J$ for $J \in \tbinom{[n]}{d}$ is contained in $X$ if and only if the following linear incidence relations are equal to zero:
For any $A \in \tbinom{[n]}{d-1}$ and $B \in \tbinom{[n]}{e+1}$, 
the \emph{incidence relation} $I_{A,B}$ is
\begin{equation}\label{def-incidence}
 I_{A,B} \coloneqq \sum_{i \in B \setminus A} (-1)^{|[i]\cap A| + |[i] \cap B|} q_{B \setminus i} \, P_{A \cup i}\enspace .
\end{equation}

\subsection{The locus of codimension-one subspaces}\label{sec-subspaceofcodimonespaces}

Suppose $X$ is a $(d+1)$-dimensional linear subspace of $K^n$.
Fix a matrix $W \in K^{(d+1)\times n}$ such that
$$X = \rowspace(W).$$ 
For $i \in [d+1]$ and $J \in \binom{[n]}{d}$, let $W_{-i}^J$ be the square submatrix of $W$
whose rows are indexed by $[d+1] \setminus \{i\}$ and whose columns are indexed by $J$.
We define the $(d+1)\times \binom{n}{d}$ matrix $V$ via
\begin{equation}\label{defV}
V_{i,J}=(-1)^{i+1}\det(W_{-i}^J)\enspace .
\end{equation}
The rowspace of $V$ encodes the codimension-1 subspaces of $X$, as the following theorem shows. 

\begin{theorem}\label{thm-subspace}
Let $X \subset K^{n}$ be a $(d+1)$-dimensional linear subspace, 
and $L(X) \subset \Gr(d,n) \subset \PP^{\binom{n}{d}-1}$ be the moduli space of $d$-dimensional linear subspaces contained in $X$. 
Then $L(X)$ is itself a $d$-dimensional projective subspace of $\PP^{\binom{n}{d}-1}$. 
Indeed, if $X$ is the rowspace of a matrix $W$, then $L(X)$ is equal to the rowspace of the matrix $V \in K^{(d+1)\times \binom{n}{d}}$ (depending on $W$)
defined in Equation \eqref{defV} above.
\end{theorem}   

\begin{proof}
Any $d$-dimensional subspace $Y$ of $X$ is equal to the rowspace of a matrix $T \cdot W$ 
where $T \in K^{d \times (d+1)}$. By the Cauchy-Binet Formula, 
the maximal minors of $T \cdot W$ satisfy
\[
\det\big((T \cdot W)^J\big) = \sum_{i=1}^{d+1} \det\big(T^{[d+1]\setminus i}\big)\, \det\big(W_{-i}^J\big)\enspace ,
\]
where $T^{[d+1]\setminus i}$ denotes the maximal submatrix of $T$ obtained by deleting column $i$.
If $t \in K^{d+1}$ denotes the vector with coordinates $t_i = (-1)^{i+1} \det\big(T^{[d+1]\setminus i}\big)$, 
it follows that the vector $y \in \Gr(d,n)$ of Pl\"ucker coordinates of $Y$ satisfies
$y = t \cdot V$, showing that $y$ belongs to the rowspace of $V$.
Moreover, as $T$ varies over all full rank matrices in $K^{d \times (d+1)}$, 
its maximal minors vary over all of $\Gr(d,d+1) \cong \mathbb{P}^{d}$, 
and so $t$ also varies over all of $K^{d+1}$, 
which shows that $L(X)$ is in fact equal to the rowspace of $V$.

Finally, to see that $L(X)$ is a $d$-dimensional projective subspace of $\PP^{\binom{n}{d}-1}$, 
we note that the matrix $V$ has full rank $d+1$. 
Indeed, since $X$ is a $(d+1)$-dimensional linear subspace, the matrix $W$ contains an invertible
maximal square submatrix $W^I$, indexed by a $(d+1)$-subset of columns $I \subset [n]$. 
The maximal square submatrix $V^I$ of $V$, whose columns we index by subsets of the form 
$I \setminus \{j\}$ where $j \in I$, has entries equal to the $d \times d$ minors of $W^I$ up to sign. 
In fact, this matrix can be obtained from the cofactor matrix of $W^I$ by negating every other column. 
Since the cofactor matrix of $W^I$ is invertible and negating columns preserves invertibility, this shows that the maximal square submatrix $V^I$ of $V$ is
invertible, and thus $V$ has full rank $d+1$.
\end{proof}

We now describe the subspace $L(X)$ in a different way, as the kernel of a matrix encoding the
incidence relations which are defined in Equation \eqref{def-incidence}. 
For $I \in \binom{[n]}{d+1}$, denote by $q_I$ the Pl\"ucker coordinate $q_I := \det\big(W^I\big)$ of the
subspace $X$. Let $U$ be the $\big(\binom{n}{d-1}\binom{n}{d+2} \times \binom{n}{d}\big)$-matrix whose rows encode the coefficients of the incidence relations. 
That is, for $A \in \tbinom{[n]}{d-1}$, $B \in \tbinom{[n]}{d+2}$, and $J \in \binom{[n]}{d}$,
the entry of $U$ in the row indexed by $(A,B)$ and the column indexed by $J$ is
\begin{equation}\label{defU}
 U_{(A,B),J} := \begin{cases}
(-1)^{|[i]\cap A| + |[i] \cap B|} q_{B\setminus i} & \text{if } J = A \cup i \text{ for some } i \in B\setminus A,\\
0 & \text{otherwise.}
\end{cases}
\end{equation}

\goodbreak

\begin{corollary}\label{cor-kernel}
Let $X \subset K^{n}$ be a $(d+1)$-dimensional linear subspace.
The projective subspace $L(X) \subset \Gr(d,n) \subset \PP^{\binom{n}{d}-1}$ consisting 
of all $d$-dimensional linear subspaces contained in $X$ is equal to the kernel of the matrix 
$U \in K^{\binom{n}{d-1}\binom{n}{d+2} \times \binom{n}{d}}$ defined in Equation \eqref{defU}.
\end{corollary}
\begin{proof}
The kernel of $U$ is the zero set in $\PP^{\binom{n}{d}-1}$ of all incidence relations 
$I_{A,B}$, which express the conditions on the Pl\"ucker coordinates of a $d$-dimensional subspace
for it to be contained in $X$.
This means that $\ker(U) \cap \Gr(d,n)$ is the $d$-dimensional projective subspace $L(X)$.
However, as we show next, the kernel of $U$ has (projective) dimension at most $d$, and so it must be equal to $L(X)$.

It is enough to prove that $U$ has at least $\binom{n}{d} -(d+1)$ independent rows to conclude that $\ker(U)$ is at most $d$ dimensional. 
Let $C\in\binom{[n]}{d+1}$ be such that $q_C\neq 0$. 
There are  $\binom{n}{d} -(d+1)$ sets $I$ such that $|I| = d$ and $I\not\subset C$. 
For every such $I$, let $i \in I \setminus C$, and consider the row $(A,B)$ of $U$ with $A=I\setminus i$ and $B=C\cup i$. 
The $I$-th entry of this row is $U_{(A,B),I} = \pm q_C\neq 0$. Furthermore, for $J\in \binom{[n]}{d}$ with $J\neq I$ and $|J\cap C| \geq |I\cap C|$ we have $U_{(A,B),J} = 0$, as $J = A\cup j = (I\setminus i)\cup j$ for some $j\in B = C\cup i$ implies $|J\cap C| < |I\cap C|$ or $i=j$, and hence $I=J$. 
	This proves that, up to permutation, $U$ contains a $(\tbinom{n}{d} -d-1)\times(\tbinom{n}{d} -d-1)$ lower triangular matrix with diagonal entries equal to $\pm q_C \neq 0$.
\end{proof}

The fact that the subspace defined by the incidence relations $I_{A,B}$ lies inside the Grassmannian $\Gr(d,n)$ implies that the incidence relations of codimension-$1$ imply the Pl\"ucker relations (up to radical). In fact, a stronger algebraic statement holds. We could not find this result in the literature, thus for the sake of completeness we state it here and provide a proof in the appendix.

\begin{theorem}\label{thm-incidenceimpliesPluecker}
Consider the values $q_I$ in the incidence relations as variables (see Equation \eqref{def-incidence}). Then the Pl\"ucker relations are in the saturation of the ideal generated by the incidence relations of codimension-$1$. More precisely, for any fixed $C\in\tbinom{[n]}{d+1}$ we have $R_{A,B} \in (\mathcal{I}  : \langle q_C\rangle^\infty)$, where $\mathcal{I}$ is the ideal generated by all in incidence relations $I_{D,E}$ with $D,E\subset [n]$, $|D|=d-1$, and $|E|=d+2$.
\end{theorem}


We conclude this section with an example showing that the fact that the incidence relations imply the Pl\"ucker relations is a phenomenon unique to the codimension-1 case.

\begin{example}
The hypothesis that $e=d+1$ is essential for the results in this section. 
Consider the case $n=4$, $d=2$ and $e=4$, for instance,
there are no incidence relations (since any $2$-dimensional linear subspace is contained in the only $4$-dimensional linear subspace of $K^4$),  
and so clearly the Pl\"ucker relation defining $\Gr(2,4)$ is not implied by the incidence relations.
Also, the space of $2$-dimensional subspaces contained in $K^4$ is not a linear subspace of $\PP^5$,
it is equal to the Grassmannian $\Gr(2,4)$.

As a different example, take $n=5$, $d=2$ and $e=4$. 
It can be computationally verified that, in this case, the ideal $\mathcal I$ generated by the incidence relations
contains no polynomials involving only the variables $\{P_J\}_{J \in \binom{5}{2}}$, 
not even after saturating by all the variables $q_I$, and so the Pl\"ucker relations defining $\Gr(2,5)$ are not contained in the saturation of $\mathcal I$.
\end{example}

\subsection{The matroid of codimension-1 subspaces}\label{sec-matroid of lines}

\noindent We now present various ways of describing the matroid of the subspace 
$L(X) \subset \PP^{\binom{n}{d}-1}$ parametrizing codimension-$1$ subspaces of a linear subspace $X \subset K^n$. 
We start by recalling the concept of {matroid of lines} of a hyperplane arrangement.

\begin{definition}\label{def-matroidoflines}
Let $\mathcal A = \{H_k\}_{k=1}^n$ be a hyperplane arrangement in $K^{d+1}$. For $J \in \binom{[n]}{d}$, let 
\[
\ell_J \coloneqq \begin{cases}
\bigcap_{k \in J} H_k  & \text{if this intersection has dimension 1}, \\
0 & \text{otherwise}.
\end{cases}
\]
The \emph{matroid of lines} of $\mathcal A$ is defined as the matroid on the ground set $\binom{[n]}{d}$ that encodes the dependencies among the lines $\ell_J$. Here a collection of lines is independent if the dimension of the smallest linear subspace containing these lines equals the number of lines. 
\end{definition}

If $X \subset K^n$ is a linear subspace of dimension $d+1$, 
the $n$ coordinate hyperplanes $\{x_i = 0\}$ of $K^n$ restrict to an arrangement $\mathcal A(X)$ 
of $n$ hyperplanes in $X \cong K^{d+1}$.
The matroid of $X$ is the matroid on the ground set $[n]$ 
where the rank of a subset $A \subset [n]$ is equal to the codimension in $X$
of the intersection $\bigcap_{i \in A} \{x_i=0\}$. 

The hyperplane arrangement $\mathcal A(X)$ can also be presented in the following way.
Let $W$ be a $(d+1) \times n$ matrix whose rowspace is equal to $X$. 
Denote by $w_k \in K^{d+1}$ the $k$-th column of $W$, for $k=1,\dots,n$. 
Let $H_k \subset K^{d+1}$ be the hyperplane consisting of all $x \in K^{d+1}$ such that 
$w_k \cdot x = 0$. 
Then $\mathcal A(X)$ is linearly isomorphic to the hyperplane arrangement $\{H_k\}_{k=1}^n$ in $K^{d+1}$. 

We now get to one of our main results.

\begin{theorem}\label{thm-main}
Let $X \subset K^n$ be a $(d+1)$-dimensional linear subspace, 
and $L(X) \subset \Gr(d,n) \subset \PP^{\binom{n}{d}-1}$ be the subspace of 
$d$-dimensional linear subspaces contained in $X$. 
The matroid of $L(X) \subset \PP^{\binom{n}{d}-1}$ is a rank-$(d+1)$ matroid that can be described in the following ways: 
\begin{enumerate}[label=(\roman*)]
	\item\label{item:colsV}
		as the matroid of dependencies among the columns of the matrix $V$ defined in Equation \eqref{defV},
	\item\label{item:dualcolsU} 
		as the dual matroid to the matroid of dependencies among the columns of the matrix $U$ defined in Equation \eqref{defU}, and
	\item\label{item:lines}
		as the matroid of lines of the hyperplane arrangement $\mathcal A(X)$ induced by $X$.
\end{enumerate}
\end{theorem} 
\begin{proof}
By Theorem \ref{thm-subspace}, the projective subspace $L(X)$ is equal to the rowspace of $V$, 
which proves description \ref{item:colsV}. By Corollary \ref{cor-kernel}, the subspace $L(X)$
is also equal to the kernel of the matrix $U$, proving \ref{item:dualcolsU}.
To prove description \ref{item:lines}, consider the hyperplane arrangement 
$\mathcal A(X) = \{H_k\}_{k=1}^n$ in $K^{d+1}$, where $H_k$ consists of all $x \in K^{d+1}$ 
orthogonal to the $k$-th column $w_k$ of $W$. For any $J \in \binom{[n]}{d}$, the subspace $\ell_J$ in 
Definition \ref{def-matroidoflines} is a line if and only if the $(d+1) \times d$ 
submatrix $W^J$ of $W$ consisting of the columns indexed by $J$ has rank $d$, 
which is the case precisely when it has a nonzero maximal minor. 
The maximal minors of the submatrix $W^J$ are 
(up to sign) the entries of the column $V_J$ of the matrix $V$,
so $\ell_J \neq \{0\}$ if and only if $V_J \neq 0$. 
In this case, the line $\ell_J$ is equal to the left kernel of $W^J$, and by Cramer's Rule,
$\ell_J$ is spanned by $V_J$. 
This, together with description \ref{item:colsV}, proves description~\ref{item:lines}. 
\end{proof}

\begin{remark}\label{rem-nonconstantcoefficients}
In Section \ref{chap-tropical} below, we consider the case where $K$ is a field with a non-Archimedean valuation. If the coefficients of the equations defining the linear space $X$ are not contained in a subfield on which the valuation is trivial, it is interesting to study the \emph{valuated matroid} of the linear subspace $L(X)$. In fact, a valuated analog of Theorem \ref{thm-main} holds, with the same proof. An example of this is given in Example \ref{ex:valuated} below.
\end{remark}

\begin{example}\label{ex:matroid_of_lines} Consider the $3$-dimensional space $X\subset K^4$ spanned by the rows of the matrix
\[ W =
\begin{pmatrix}
1&0&0&1\\
0&-1&0&0\\
0&0&1&1
\end{pmatrix} \enspace .
\]
Its Pl\"ucker vector is $(-1,-1,0,-1)$, listed in lexicographic order.
The moduli space $L(X)\subset \Gr(2,4) \subset \PP^5$ of planes in $X$ is equal to the rowspace of $V$ or the kernel $U$, where
\[
V = \begin{pmatrix}
0&0&0&1&1&0\\
0&-1&-1&0&0&1\\
1&0&0&0&-1&0
\end{pmatrix} \quad \text{ and } \quad
U = \begin{pmatrix}
0&1&-1&0&0&0\\
-1&0&0&1&-1&0\\
0&-1&0&0&0&1\\
0&0&-1&0&0&1
\end{pmatrix} \enspace .
\]
The maximal minors of $V$ are (up to sign) the Pl\"ucker coordinates of the linear space $L(X)$. In lexicographic order, they are
\[
(0,1,1,0, 1,1,0, 0,1, 1,   0,0,0, -1,0, 0,   -1,0, 0, 1) \enspace.
\]
The subsets that index the nonzero coordinates of this Pl\"ucker vector form the bases of the matroid $M$ of $L(X)$, which is equal to the matroid of the columns of $V$ and also the dual of the matroid of the columns of $U$.

We end this example by describing $M$ as a matroid of lines. The subspace $X$ is the vanishing set of the equation $x_1+x_3-x_4=0$. The four coordinate hyperplanes in $K^4$ induce a hyperplane arrangement $\mathcal{A}(X)$ in $X$. Eliminating the last coordinate projects the arrangement $\mathcal{A}(X)$ onto the three coordinate hyperplanes $H_i=\{x_i=0\}$ where $i\leq 3$ and $H_4=\{x_1+x_3=0\}$ in $K^3$.
By Theorem \ref{thm-main}, the matroid of lines of the arrangement $\{H_1, H_2, H_3, H_4\}$ is $M$. Figure~\ref{fig:matroid_of_lines} shows this hyperplane arrangement in $K^3$ together with the six lines (three of them coincide), its intersection lattice, and the matroid $M$ as a point configuration.
\end{example}

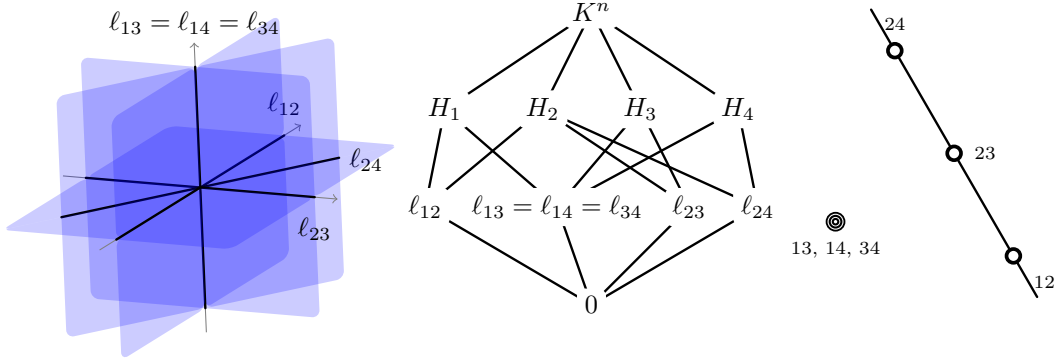
\begin{figure}[ht]
    \centering
    \begin{tikzpicture}[scale = 1.3,
                    line/.style={black, line width=0.9pt, line cap=round},
                    color = {black}]

\begin{scope}[x  = {(0.9cm,-0.076cm)},
              y  = {(-0.04cm,0.95cm)},
              z  = {(0.66cm,0.41cm)},
              scale = 1.3,
              face/.style={draw=none, opacity=0.2, fill=blue,rounded corners=3pt},
              axis/.style={draw=gray,->},
              color = {black}]

\coordinate (a0) at ( 1,  1, 0);
\coordinate (a1) at (-1, -1, 0);
\coordinate (a2) at ( 1, -1, 0);
\coordinate (a3) at (-1,  1, 0);

\coordinate (b0) at (0,  1,  1);
\coordinate (b1) at (0, -1, -1);
\coordinate (b2) at (0,  1, -1);
\coordinate (b3) at (0, -1,  1);

\coordinate (c0) at ( 1, 0,  1);
\coordinate (c1) at (-1, 0, -1);
\coordinate (c2) at ( 1, 0, -1);
\coordinate (c3) at (-1, 0,  1);

\coordinate (d0) at (-.7, -1, -.7);
\coordinate (d2) at ( .7, -1,  .7);
\coordinate (d3) at (-.7,  1, -.7);
\coordinate (d1) at ( .7,  1,  .7);

\draw[face] ($.5*(b0)+.5*(b3)$) -- (0,0,0) -- ($.5*(b1)+.5*(b3)$) -- (b3) -- cycle;
\draw[face] (c0) -- (0,0,0) -- ($.5*(c1)+.5*(c3)$) -- (c3) -- cycle;
\draw[axis] (0,0,0) -- (0,0,1.2);
\node[label = above:{\small $\ell_{12}$}]  at (0,0,1) {};
\draw[line] (0,0,0) -- ($.5*(b0)+.5*(b3)$);
\draw[face] ($.5*(b3)+.5*(b0)$) -- (0,0,0) -- ($.5*(b2)+.5*(b0)$) -- (b0) -- cycle;
\draw[face] (d2) -- ($.5*(d2)+.5*(d0)$) -- ($.5*(d3)+.5*(d1)$) -- (d1) -- cycle;
\draw[line] (0,0,0) -- ($.5*(d1)+.5*(d2)$);
\node[label = right:{\small $\ell_{24}$}]  at (.65,0,.65) {};
\draw[face] (0,0,0) -- ($.5*(c2)+.5*(c0)$) -- (c0) -- cycle;
\draw[face] (a0) -- (a2) -- (a1) -- (a3) -- cycle;
\draw[axis] (-1.2,0,0) -- (1.2,0,0);
\draw[axis] (0,-1.2,0) -- (0,1.2,0);
\draw[line] ($.5*(a0)+.5*(a3)$) -- ($.5*(a1)+.5*(a2)$);
\draw[line] ($.5*(a0)+.5*(a2)$) -- ($.5*(a1)+.5*(a3)$);
\draw[face] (0,0,0) -- ($.5*(c1)+.5*(c3)$) -- (c1) -- cycle;
\draw[face] (d0) -- ($.5*(d2)+.5*(d0)$) -- ($.5*(d3)+.5*(d1)$) -- (d3) -- cycle;
\draw[line] (0,0,0) -- ($.5*(d0)+.5*(d3)$);
\draw[face] ($.5*(b2)+.5*(b1)$) -- (0,0,0) -- ($.5*(b1)+.5*(b3)$) -- (b1) -- cycle;
\draw[draw=gray] (0,0,-1.2) -- (0,0,0);
\draw[face] (c2) -- ($.5*(c0)+.5*(c2)$) -- (0,0,0) -- (c1) -- cycle;
\draw[face] ($.5*(b1)+.5*(b2)$) -- (0,0,0) -- ($.5*(b2)+.5*(b0)$) -- (b2) -- cycle;
\draw[line] (0,0,0) -- ($.5*(b1)+.5*(b2)$);

\node[label = below:{\small $\ell_{23}$}]  at (1,0,0) {};
\node[label = above:{\small $\ell_{13}=\ell_{14}=\ell_{34}$}]  at (0,1.1,0) {};
\end{scope}


\coordinate (H1) at (2.5,.8);
\coordinate (H2) at (3.5,.8);
\coordinate (H3) at (4.5,.8);
\coordinate (H4) at (5.5,.8);
\coordinate (l12) at (2.3,-.2);
\coordinate (lx) at (3.65,-.2);
\coordinate (l23) at (5,-.2);
\coordinate (l24) at (5.7,-.2);
\coordinate (b) at (4,-1.2);
\coordinate (t) at (4,1.8);

\tikzstyle{node}=[text=black, inner sep=3pt, rectangle, rounded corners=3pt,fill=white, draw=none]

\foreach \i/\k in {1/12,1/x,2/12,2/23,2/24,3/23,3/x,4/24,4/x} {
   \draw[line] (H\i) -- (l\k);
  }

\foreach \i in {1,2,3,4} {
   \draw[line] (H\i) -- (t);
  }
  
\foreach \k in {12,23,24,x} {
   \draw[line] (l\k) -- (b);
  }

\node[node]  at (b) {\small $0$};
\node[node]  at (l12) {\small $\ell_{12}$};
\node[node]  at (lx) {\small $\ell_{13}=\ell_{14}=\ell_{34}$};
\node[node]  at (l23) {\small $\ell_{23}$};
\node[node]  at (l24) {\small $\ell_{24}$};
\node[node] at (H1) {\small $H_1$};
\node[node] at (H2) {\small $H_2$};
\node[node] at (H3) {\small $H_3$};
\node[node] at (H4) {\small $H_4$};
\node[node] at (t) {\small $K^n$};


  \tikzstyle{vertex}=[draw=black, fill=white, line width=1.4pt]
  \coordinate (trans) at (7.1,0);

  \coordinate (p24) at ($(90:1.4)+(trans)$); 
  \coordinate (p12) at ($(330:1.4)+(trans)$); 
  \coordinate (p23) at ($.5*(90:1.4)+.5*(330:1.4)+(trans)$); 
  \coordinate (p13) at ($(210:.7)+(trans)$); 

  \node[label=below right:{\tiny $12$}]  at (p12) {};
  \node[label=below:{\tiny $13$, $14$, $34$}]  at (p13) {};
  \node[label=right:{\tiny $23$}]  at (p23) {};
  \node[label=above:{\tiny $24$}]  at (p24) {};

  \draw[line] ($1.2*(p24)-.2*(p12)$) -- ($1.2*(p12)-.2*(p24)$);

  \draw[vertex] (p12) circle (2pt);
  \draw[vertex, line width=0.7pt] (p13) circle (2.6pt);
  \draw[vertex, line width=0.8pt] (p13) circle (1.7pt);
  \draw[vertex, line width=0.8pt] (p13) circle (0.8pt);
  \draw[vertex] (p23) circle (2pt);
  \draw[vertex] (p24) circle (2pt);
  
\end{tikzpicture}
    \caption{The hyperplane arrangement $\mathcal A(X)$ in $K^3$ (left), its intersection lattice (center), and the matroid $M$ of $L(X)$ (right), from Example~\ref{ex:matroid_of_lines}.}
    \label{fig:matroid_of_lines}
\end{figure}

\subsection{Genericity and Dilworth truncations} \label{sec-genericity}

\noindent The moduli space $L(X)$ and its matroid depend in general on the linear space $X$. In this subsection we show that the matroid of $L(X)$ is constant for generic spaces $X$, and we give a explicit combinatorial description of this matroid. 

\begin{proposition}\label{prop-generic}
There is an open dense subset (in the Zariski topology) of the Grassmannian $\Gr(d+1,n)$
such that for all $X$ in this subset the matroid of $L(X)$ is the same. 
We call a subspace $X$ in this subset \emph{generic}.
\end{proposition}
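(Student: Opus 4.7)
The plan is to reduce the statement to a semi-continuity argument applied to the matrix $V$ of concrete coordinates introduced in Definition~\ref{def-Felipesmatrix}. By Theorem~\ref{thm-main}, the space $L(X)$ is a linear subspace of $\PP^{\binom{n}{d}-1}$, so (since we are in the constant-coefficient setting) $\trop(L(X))$ is just the Bergman fan of the matroid of $L(X)$, and thus $\trop(L(X))$ depends on $X$ only through the matroid of $L(X)$. By Theorem~\ref{thm-main}\ref{it:main2}, this matroid is the matroid realized by the columns of the matrix $V=V(X)$. It therefore suffices to show that the matroid of the columns of $V(X)$ is constant on a dense Zariski open subset of $\Gr(d+1,n)$.

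The key observation is that the entries of $V(X)$ are the maximal minors of the matrices $W_i$ of Definition~\ref{def-Felipesmatrix}, hence polynomials in the Pl\"ucker coordinates $(q_J)_{J\in\binom{[n]}{d+1}}$ of $X$. Consequently, for every subset $S\subseteq \binom{[n]}{d}$, every maximal minor of the submatrix $V(X)^S$ is a polynomial on $\Gr(d+1,n)$, and the condition that this minor vanishes defines a closed subset. Equivalently, the rank function
\[
\Gr(d+1,n)\longrightarrow \ZZ_{\geq 0},\qquad X\longmapsto \rank(V(X)^S)
\]
is lower semi-continuous: for each integer $k$, the set
\[
U_{S,k}\coloneqq \SetOf{X\in\Gr(d+1,n)}{\rank(V(X)^S)\geq k}
\]
is Zariski open, being the non-vanishing locus of the $k\times k$ minors of $V^S$. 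Letting $r_S$ denote the maximal value attained by $X\mapsto \rank(V(X)^S)$, the set $U_S\coloneqq U_{S,r_S}$ is open and, by irreducibility of $\Gr(d+1,n)$, dense.

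The finite intersection
\[
U\coloneqq \bigcap_{S\subseteq\binom{[n]}{d}} U_S
\]
is then open and dense in $\Gr(d+1,n)$. By construction, for every $X\in U$ and every $S$ the rank $\rank(V(X)^S)$ takes its generic value $r_S$, and so the collection of independent sets of the column matroid of $V(X)$, namely $\{S:\rank(V(X)^S)=|S|\}$, is the same for all $X\in U$. In particular, the matroid of $L(X)$ is constant on $U$, and hence so is its Bergman fan $\trop(L(X))$.

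The only real subtlety is verifying that the generic values $r_S$ are indeed attained, which is automatic once one observes that $\Gr(d+1,n)$ is irreducible so that each locally closed stratum determined by fixing $\rank(V(X)^S)$ simultaneously for all $S$ is either empty or irreducible, with a unique top-dimensional stratum. This gives the claimed open dense locus of \emph{generic} $X$; it is worth noting that, as the excerpt emphasizes, two $X_1,X_2$ with the same tropicalization need not both lie in $U$, so genericity here is a condition on $X$ rather than on $\mathcal X=\trop(X)$.
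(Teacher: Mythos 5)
Your proposal is correct and follows essentially the same approach as the paper: both rest on the observation that the entries of $V(X)$ are polynomial in the Pl\"ucker coordinates of $X$, so the matroid of the columns of $V(X)$ can only degenerate on a proper Zariski-closed subset of the irreducible Grassmannian, and you simply phrase the non-degeneracy condition via lower semi-continuity of $\rank(V(X)^S)$ over all $S$ while the paper states it via non-vanishing of maximal minors of $V$ --- these cut out the same open dense locus. One small remark: your final paragraph about the strata being irreducible is both unnecessary (the maximum $r_S$ of a bounded $\ZZ$-valued function is automatically attained) and incorrect as stated, but this does not affect the argument, which is already complete at the end of the previous paragraph.
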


A word of warning on our terminology: the subset parametrizing generic subspaces is in general a strict subset of the set of linear subspaces whose matroid is uniform.

\begin{proof}
Assume $X$ is the rowspace of the matrix $W$.
Denote by $q_I := \det(W^I)$ the Pl\"ucker coordinates of $X$, for $I \in \binom{[n]}{d+1}$.
Assume without loss of generality that $q_{[d+1]} = 1$. We can row-reduce the matrix 
$W$ so that its submatrix $W^{[d+1]}$ is equal to the identity matrix.
In this situation, the entries of the matrix $V$ defined in Equation \eqref{defV} satisfy
$$V_{i,J}:=(-1)^{i+1}\det(W_{-i}^J)= \det(e_i \mid W^J),$$
where $(e_i \mid W^J)$ denotes the $(d+1) \times (d+1)$ matrix whose first column
is the coordinate vector $e_i$ and whose last $d$ columns are the submatrix $W^J$.
This shows that, in fact,
\begin{equation}\label{eq:entriesareplucker}
V_{i,J}= \begin{cases}
(-1)^{|[i] \cap J|}q_{J\cup\{i\}} & \text{ if } i \notin J,\\
0 & \text{ if } i \in J.
\end{cases}
\end{equation}
The maximal minors of $V$ are thus polynomials in the Pl\"ucker coordinates $q_I$.  
Some of these polynomials might be identically zero in the Grassmannian $\Gr(d+1,n)$. 
The desired open subset of $\Gr(d+1,n)$ is obtained by requiring that all the maximal minors of 
$V$ that are not identically zero do not vanish. 
Note that this results in an open dense subset of $\Gr(d+1,n)$, 
as this is an irreducible algebraic variety. 
\end{proof}

\begin{remark}\label{rem:reducible} It is interesting to study the matroid of the moduli space $L(X)$ also for non-generic $X$. In particular, given a realizable matroid $M$, one could ask what the matroid of $L(X)$ is for ``sufficiently generic'' $X$ realizing the matroid $M$. In the case where $M$ is not a uniform matroid, though, it is not even clear that the matroid of $L(X)$ is constant in a dense open set of the realization space of $M$, as this space might not be irreducible.
\end{remark}

\begin{example}\label{ex-u36}
	Let $d=2$ and $n=6$. After row-reduction, we may assume the matrix $W$ whose rows span the space $X$ is equal to
	\begin{align*}
		W = \begin{pmatrix}
			1&0&0&q_{234}&q_{235}&q_{236}\\
			0&1&0&-q_{134}&-q_{135}&-q_{136}\\
			0&0&1&q_{124}&q_{125}&q_{126}\\
		\end{pmatrix}.
	\end{align*}
	Using the lexicographic ordering for the $2$-sets of $\{1,2,\dots,6\}$, by Equation \eqref{eq:entriesareplucker} the $3 \times 15$ matrix $V$ from Equation \eqref{defV} is 
	\[
		V = \left(\begin{smallmatrix}
			0&0&0&0&0&1&q_{124}&q_{125}&q_{126}&q_{134}&q_{135}&q_{136}&q_{145}&q_{146}&q_{156}\\
			0&-1&-q_{124}&-q_{125}&-q_{126}&0&0&0&0&q_{234}&q_{235}&q_{236}&q_{245}&q_{246}&q_{256}\\
			1&0&-q_{134}&-q_{135}&-q_{136}&0&-q_{234}&-q_{235}&-q_{236}&0&0&0&q_{345}&q_{346}&q_{356}
		\end{smallmatrix}\right),
	\]
	where we used that $q_{123}=1$.

	The Pl\"ucker coordinates of $L(X)$ are given by the $3\times 3$ minors of the matrix $V$, 
	and the matroid of $L(X)$  encodes which of these minors vanish.
	Sixty out of the $\tbinom{15}{3}=455$ maximal minors of $V$ always vanish due to the fact that the $q_I$ satisfy the Pl\"ucker relations (see also Example~\ref{ex-n-2DWgeom} and Theorem~\ref{theorem-DW} below). Another $380$ of these minors are monomials in the $q_I$ when reduced modulo the Pl\"ucker relations.
	These minors are non-vanishing if the matroid of $X$ is the uniform matroid $U_{3,6}$, i.e., if all the Pl\"ucker coordinates $q_I$ are nonzero.
	The remaining $15$ maximal minors that do not reduce to monomials might vanish or not depending on $X$. 
	For example
	\[
		\det V^{\{12, 34, 56\}}= q_{134}\cdot q_{256} - q_{156}\cdot q_{234} = q_{124}\cdot q_{356} - q_{456},
	\]
	where the last equality follows from the Pl\"ucker relation $R_{56,1234}$, and
	\begin{align*}
		\det V^{\{16, 25, 34\}} &= q_{126}\cdot q_{134}\cdot q_{235} - q_{125}\cdot q_{136}\cdot q_{234}\\
					&=q_{134}\cdot\left(q_{125}\cdot q_{236} - q_{256} \right) - q_{125}\cdot\left(q_{134}\cdot q_{236}-q_{346} \right)\\ 
					&=q_{125}\cdot q_{346} -  q_{134}\cdot q_{256} \enspace .
	\end{align*}
	The $15$ maximal minors of $V$ that might vanish are obtained by permuting the labels $1,2,3$ and $4,5,6$. Up to signs, they are:
\begin{align*}
	\begin{array}{ccc}
		q_{124}\cdot q_{356}-q_{456}, & q_{125}\cdot q_{346}+q_{456}, & q_{126}\cdot q_{345}-q_{456},\\		
		q_{134}\cdot q_{256}+q_{456}, & q_{135}\cdot q_{246}-q_{456}, & q_{136}\cdot q_{245}+q_{456},\\		
		q_{234}\cdot q_{156}-q_{456}, & q_{235}\cdot q_{146}+q_{456}, & q_{236}\cdot q_{145}-q_{456},\\
		q_{125}\cdot q_{346}-q_{134}\cdot q_{256}, & q_{135}\cdot q_{246}-q_{234}\cdot q_{156}, & q_{235}\cdot q_{146}+q_{124}\cdot q_{356},\\
		q_{125}\cdot q_{346}+q_{234}\cdot q_{156}, & q_{135}\cdot q_{246}-q_{124}\cdot q_{356}, & q_{235}\cdot q_{146}-q_{134}\cdot q_{256}\;.
	\end{array}
\end{align*}
If we require that all Pl\"ucker coordinates $q_I$ and all these minors are nonzero, we obtain an open dense subset of $\Gr(3,6)$ parametrizing linear subspaces $X$ for which $L(X)$ is constant.
\end{example}


We now combinatorially describe the matroid of $L(X)$ in the case that the linear subspace $X$ is generic.
The relevant matroidal construction is called the Dilworth truncation.
The following is the definition from \cite[Chapter 7, Exercise 7.55]{White:1986} by Brylawski.

\begin{definition}\label{def-Dilworth}
Let $M$ be a rank-$e$ matroid on the ground set $[n]$.
For $1\leq k \leq e$, the $k$-th \emph{Dilworth truncation} $\Dil_k(M)$ of $M$ is 
defined as follows in terms of independent sets.
The ground set of $\Dil_k(M)$ is the set of flats of rank $k$ of $M$. 
The independent sets of $\Dil_k(M)$ are the sets $\{J_1,\ldots,J_m\}$ such that 
for any $s>0$ and $\{j_1,\ldots,j_s\}\subset [m]$, the union $\bigcup_{l=1}^s J_{j_l}$ has rank at least $s+k-1$ in $M$.
The matroid $\Dil_k(M)$ has rank $e-k+1$.
\end{definition}
Note that the first Dilworth truncation $\Dil_1(M)$ of a simple matroid $M$ is $M$ itself, and the $e$-th Dilworth truncation $\Dil_e(M)$ is the uniform matroid $U_{1,1}$ with one coloop. 

The following theorem gives a geometric interpretation of the Dilworth truncation of a matroid $M$ realizable over $\CC$.
\begin{theorem}[\cite{Bry85}]\label{thm-geomDW}
Let $P$ be a collection of $n$ points in $\mathbb{P}^{e-1}$ realizing the matroid $M$.
Suppose $H \subset \mathbb{P}^{e-1}$ is a generic subspace of codimension $k-1$.
Each $k$-flat $F$ spanned by $P$ (of projective dimension $k-1$) intersects $H$ in a point that we label $h_F$. 
Then this point configuration $\{h_F\}$ in $H$ realizes the $k$-th Dilworth truncation $\Dil_k(M)$ of $M$. 
\end{theorem}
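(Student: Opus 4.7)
The plan is to match the rank functions of the two matroids via a generic-transversality argument in linear algebra.

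Fix lifts $v_1,\dots,v_n \in K^e$ of the points of $P$, so that the subspace $V_F := \operatorname{span}\{v_i : i \in F\}$ has dimension $\operatorname{rank}_M(F)$ for every flat $F$. Write $H$ as the projectivization of a linear subspace $\widehat H \subset K^e$ of dimension $e-k+1$. For any rank-$k$ flat $F$, generic transversality makes $V_F \cap \widehat H$ one-dimensional; let $\tilde h_F \in K^e$ be a generator. The matroid realized by the configuration $\{h_F\}$ in $H$ is then precisely the matroid of linear dependencies among the vectors $\tilde h_F$, so the task is to match its rank function with $\operatorname{rank}_{\Dil_k(M)}$.

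The core of the argument is the dimension identity
\[
\dim_K \operatorname{span}\{\tilde h_{F_1},\dots,\tilde h_{F_m}\}
\;=\;\operatorname{rank}_{\Dil_k(M)}\bigl(\{F_1,\dots,F_m\}\bigr),
\]
valid for generic $\widehat H$ and every collection of rank-$k$ flats. The upper bound is clean: for any set partition $\pi$ of $\{1,\dots,m\}$ and any block $B\in\pi$, the vectors $\{\tilde h_{F_i}\}_{i\in B}$ lie in $V_{\bigcup_{i\in B} F_i}\cap \widehat H$, whose dimension for generic $\widehat H$ equals $\operatorname{rank}_M(\bigcup_{i\in B} F_i)-k+1$. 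Summing these block bounds and minimizing over $\pi$ yields an upper bound on the dimension of the total span; by the standard equivalence between the Hall-type independence condition of Definition~\ref{def-Dilworth} and the partition rank formula for Dilworth truncations (an easy double inequality using submodularity of $\operatorname{rank}_M$), this minimum equals $\operatorname{rank}_{\Dil_k(M)}(\{F_1,\dots,F_m\})$.

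The main obstacle is the matching lower bound: one must show that whenever $\{F_1,\dots,F_m\}$ is independent in $\Dil_k(M)$, there exists at least one $\widehat H$ making the vectors $\tilde h_{F_i}$ linearly independent, for then by upper semicontinuity of the corank this property holds on a Zariski-open dense subset of $\Gr(e-k+1,e)$. A natural attack is to parametrize $\widehat H$ by $k-1$ generic linear forms and use Cramer's rule to express each $\tilde h_F$ as a vector of $k\times k$ minors of the matrix whose rows are $\{v_i : i\in F\}$ together with these forms; the Hall-type rank condition then translates into the non-vanishing of a determinant witnessing linear independence of the $\tilde h_{F_i}$. An alternative is an induction on $k$: slicing $\widehat H$ by one more generic hyperplane reduces the problem from $k$ to $k-1$, with the base case $k=1$ (where $\widehat H = K^e$ and $h_F=F$) immediate. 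Once the dimension identity is in hand, the equality of matroids is a formality.
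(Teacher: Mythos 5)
The paper does not prove Theorem~\ref{thm-geomDW}; it cites Brylawski \cite{Bry85} for it, so there is no in-paper argument to compare against. Evaluated on its own, your outline identifies the right strategy (match the rank functions via generic transversality), and the upper-bound half is sound: once $\widehat H$ is transverse to every flat subspace $V_F$ (an open dense condition in $\Gr(e-k+1,e)$), each intersection $V_{\cup_{i\in B}F_i}\cap\widehat H$ has the predicted dimension $\operatorname{rank}_M(\cup_{i\in B}F_i)-k+1$, the block containments give the partition bound, and the equivalence of that partition formula with the Hall-type independence condition of Definition~\ref{def-Dilworth} is standard (though perhaps more than an ``easy double inequality'' -- one also has to check the partition formula is a matroid rank function, or else argue purely at the level of independent sets).

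The genuine gap is exactly where you flag it: the lower bound is left as two sketched ``attacks'' rather than an argument, and neither is routine. For the Cramer's-rule route you would need to show, starting only from the Hall inequality on $\{F_1,\dots,F_m\}$, that some explicit choice of $k-1$ linear forms yields a nonvanishing determinant certifying independence of the $\tilde h_{F_i}$; how the combinatorial Hall condition forces that determinant to be nonzero is precisely the content of the theorem, so this risks circularity unless you actually exhibit the determinant identity. The inductive route on $k$ is more delicate than the sketch suggests: slicing $\widehat H$ by one additional generic hyperplane does not reduce $\Dil_k(M)$ to $\Dil_{k-1}(M)$ directly, because the ground sets are different (rank-$k$ flats of $M$ versus rank-$(k-1)$ flats of $M$), and the known relation between higher Dilworth truncations and iterated first truncations passes through an intermediate matroid on a larger ground set followed by a restriction. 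As written this is a plausible proof plan with its crucial half missing, not a proof.
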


We will prove that the matroid of $L(X)$ for generic linear subspaces $X$ is given by a certain relabeling of a Dilworth truncation of the free matroid $U_{n,n}$.

\begin{definition}\label{def-relabel} The matroid $\widetilde{\Dil}_k(U_{n,n})$ is the rank $n-k+1$ matroid on the set of subsets of $[n]$ of size $n-k$ obtained by relabeling the elements of the Dilworth truncation $\Dil_k(U_{n,n})$ with their complements. 
Concretely, a collection of $(n-k)$-subsets $J_1, \dots, J_m$ is independent in 
$\widetilde{\Dil}_k(U_{n,n})$ if and only if for any $s>0$ and $\{j_1,\ldots,j_s\}\subset [m]$
we have $|\bigcap_{l=1}^s J_{j_l}| \leq n-k+1-s$.
\end{definition}

\begin{example}\label{ex-n-2DWgeom}
We describe the circuits of the relabeling $\widetilde \Dil_{n-2}(U_{n,n})$. 
The elements of $\widetilde \Dil_{n-2}(U_{n,n})$ are subsets of $[n]$ of size $2$. 
In $\widetilde \Dil_{n-2}(U_{n,n})$, there are no $1$- or $2$-element circuits. 
A $3$-element subset $\{J_1,J_2,J_3\}$ is a circuit if and only if 
these 3 subsets have the form $\{a,a_1\}$, $\{a,a_2\}$, $\{a,a_3\}$. 
A $4$-element subset is a circuit if and only if it does not contain a $3$-element circuit as above. There are no circuits of size bigger than $4$. This matroid can be represented by the affine point configuration obtained by pairwise intersecting $n$ generic lines in $\PP^2$. 

For example, in the case $n=4$, an affine point configuration representing the matroid $\widetilde \Dil_{2}(U_{4,4})$ is pictured on the left of Figure \ref{fig-DW} in Subsection~\ref{subsec-tropicalization} below, together with the corresponding lattice of flats on the right. 
Note that this matroid is the (relabeled) graphical matroid of the complete graph on four nodes. In general, the Dilworth truncation $\Dil_{2}(U_{n,n})$ is the graphical matroid of the complete graph on $n$ nodes. 
\end{example}

Note that, even though the (relabeled) Dilworth truncation $\widetilde \Dil_{k}(U_{n,n})$ is a paving matroid when $k = n-2$, i.e., all its circuits have size at least $\rank(\widetilde \Dil_{k}(U_{n,n}))$, this is not the case for lower values of $k$. For example, the three subsets $[n] \setminus \{1,2\}$, $[n] \setminus \{1,3\}$, and $[n] \setminus \{2,3\}$ form a circuit in $\widetilde \Dil_2(U_{n,n})$, while $\rank(\widetilde \Dil_{2}(U_{n,n})) = n-1$.

The following theorem describes combinatorially the matroid of $L(X)$ in the generic case. 
This result also appears in \cite{AB07}, in connection to matroids arising from generic flag arrangements. 

\begin{theorem}\label{theorem-DW}
If $X \in \Gr(d+1,n)$ is a generic linear subspace
(see Proposition~\ref{prop-generic}), then the matroid of $L(X)$ equals the relabeling $\widetilde \Dil_{n-d}(U_{n,n})$.
\end{theorem}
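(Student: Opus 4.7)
The plan is to invoke the geometric Dilworth truncation theorem (Theorem~\ref{thm-geomDW}) for the configuration given by the standard basis. Take $M = U_{n,n}$ realized by the points $[e_1], \dots, [e_n] \in \PP^{n-1}$, set $k = n-d$, and choose as the required generic codimension-$(n-d-1)$ subspace
\[
H \;:=\; \PP(X) \;\subset\; \PP^{n-1},
\]
which has projective dimension $d$. For each $S \in \binom{[n]}{n-d}$, the projective flat spanned by $\{[e_i] : i \in S\}$ is $\PP(K^S)$, where $K^S \subset K^n$ denotes the coordinate subspace on $S$. Theorem~\ref{thm-geomDW} then asserts that for generic $X$ the intersections $h_S := \PP(X) \cap \PP(K^S) = \PP(X \cap K^S)$ are single points, and that the configuration $\{h_S\}_{S \in \binom{[n]}{n-d}}$ in $\PP(X)$ realizes $\Dil_{n-d}(U_{n,n})$.

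The main computational step is to identify this configuration with the lines $\ell_J$ of the arrangement $\mathcal A(X)$. Presenting $X$ as the rowspace of the matrix $W$ of Remark~\ref{rem-W} yields an isomorphism $K^{d+1} \xrightarrow{\sim} X$, $c \mapsto c^{\top} W$, and hence $\PP^d \cong \PP(X)$. Under this identification, setting $J := S^c \in \binom{[n]}{d}$, the subspace $X \cap K^S$ pulls back to $\{c \in K^{d+1} : c^{\top} w_k = 0 \text{ for all } k \in J\} = \bigcap_{k \in J} H_k = \ell_J$. Consequently, after the complementary relabeling $S \leftrightarrow J$, the matroid realized by $\{h_S\}$ is exactly the matroid of lines $\{\ell_J\}_{J \in \binom{[n]}{d}}$ of $\mathcal A(X)$, which by Definition~\ref{def-relabel} is the matroid $\tilde{\Dil}_{n-d}(U_{n,n})$. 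Applying Corollary~\ref{cor-matroidlines} yields the first claim. The statement about $\trop(L(X))$ follows from Theorem~\ref{thm-spaceoflineslinear} and the general fact, reviewed in Section~\ref{sec:preliminaries}, that the tropicalization of a linear subspace defined over a subfield on which the valuation vanishes is the Bergman fan of its underlying matroid; here the Pl\"ucker coordinates of $X$ lie in such a subfield, so the same is true for the equations cutting out $L(X)$.

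The only subtlety---not a genuine obstacle---is the compatibility of the two notions of \emph{generic} in play: genericity of $X$ in the sense of Proposition~\ref{prop-generic}, and the genericity of $H = \PP(X)$ required by Theorem~\ref{thm-geomDW}. Both cut out Zariski open dense subsets of the irreducible variety $\Gr(d+1,n)$, so their intersection is again Zariski open dense and non-empty. On this common open set the matroid of $L(X)$ is constant by Proposition~\ref{prop-generic}, and equals $\tilde{\Dil}_{n-d}(U_{n,n})$ by the argument above; hence both genericity conditions yield the same matroid of $L(X)$.
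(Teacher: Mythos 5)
Your proof is correct and follows essentially the same route as the paper: apply Theorem~\ref{thm-geomDW} with $M=U_{n,n}$, $k=n-d$, and $H=\PP(X)$, identify the resulting point configuration $\{h_S\}$ with the lines $\ell_{S^c}$ of $\mathcal A(X)$ via the complement relabeling, and then invoke Corollary~\ref{cor-matroidlines}. You are somewhat more careful than the paper in two places: you spell out the isomorphism $K^{d+1}\cong X$ used to match $X\cap K^S$ with $\ell_{S^c}$, and you explicitly reconcile the two genericity conditions in play (Proposition~\ref{prop-generic} versus the hypothesis of Theorem~\ref{thm-geomDW}) via the open-dense-intersection argument, a point the paper's proof leaves implicit.
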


\begin{proof}
Let $X \subset \PP^{n-1}$ be a generic linear subspace of codimension $n-d-1$. 
The $n$ coordinate vectors $e_i$ in $\mathbb{P}^{n-1}$ realize the free matroid $U_{n,n}$. 
Consider the hyperplanes $H_i := \{x_i = 0\}$ of $\mathbb P^{n-1}$, for $i \in [n]$.
If $J \subset [n]$ has size $n-d$, the intersection of the $d$ hyperplanes $\{H_i\}_{i \notin J}$ is the $(n-d-1)$-dimensional linear span $F_J$ of $\smallSetOf{e_i}{i\in J}$. By Theorem \ref{thm-geomDW}, the points $h_J = X \cap F_J$ realize the $(n-d)$-th Dilworth truncation $\Dil_{n-d}(U_{n,n})$. Note that the point $h_J = X \cap (\bigcap_{i\notin J} H_i)$ corresponds to the line $\ell_{J^c}$ of the arrangement $\mathcal{A}(X)$, as in Definition \ref{def-matroidoflines}. It follows by Theorem \ref{thm-main} \ref{item:lines} that the matroid of $L(X)$ equals the relabeling $\widetilde{\Dil}_{n-d}(U_{n,n})$.
\end{proof}

In view of Theorem \ref{theorem-DW}, we see that Example \ref{ex-n-2DWgeom} characterizes the matroid of the space $L(X)$ of lines in a generic plane $X \subset \mathbb P^{n-1}$.

\section{Tropicalizing the locus of codimension-one subspaces} \label{chap-tropical}

\subsection{Preliminaries on tropical geometry}\label{subsec-tropicalprelim}
We start with some basic preliminaries on tropical geometry and, in particular, tropical linear spaces. Readers already familiar with these topics can jump directly to Subsection \ref{subsec-tropicalization}. 

Tropical geometry can be viewed as algebraic geometry over the max-plus semifield, where addition is taking the maximum and multiplication is classical addition. These operations can be thought of as large scale estimate of standard operations, in the sense of what is happening to the order of magnitude of a number. From that point of view, it comes as no surprise that tropical geometry can be considered as a degeneration of algebraic geometry. Algebraic varieties are degenerated to certain polyhedral complexes called tropical varieties. This degeneration process is referred to as \emph{tropicalization}. 

The fact that tropical varieties are polyhedral complexes enables us to view tropical geometry as a way to infuse combinatorial techniques and methods from convex geometry to the world of algebraic geometry, and vice versa. 
Of course, tropical geometry also poses many interesting intrinsic challenges.

To make use of tropical geometry as a tool in algebraic geometry, the so-called \emph{realizability problem} is essential. This is the question of whether a given tropical variety arises as the tropicalization of an algebraic variety. A variation is the \emph{relative realizability} problem:

\begin{question}[Relative realizability problem]
Given a pair of tropical varieties $\mathcal{Y}\subset \mathcal{X}$ and an algebraic variety $X$ tropicalizing to $\mathcal{X}$, does there exist a subvariety $Y\subset X$ tropicalizing to $\mathcal{Y}$?
\end{question}
We offer an approach to the relative realizability problem for linear subspaces by studying tropicalizations of the spaces $L(X)$ of codimension-1 subspaces of $X$: by understanding all tropicalizations of codimension-1 subspaces in $X$, we know which tropical codimension-1 linear spaces $\mathcal{Y}\subset\mathcal{X}=\trop(X)$ are relatively realizable.

Our study of tropicalizations of spaces $L(X)$ largely builds on  Section \ref{chap-matroids}, making use of the theory of tropicalization of linear spaces.
We work over a ground field $K$ which is algebraically closed and has a non-Archimedean valuation $\val$, e.g., the field of Puiseux series  $ \CCt$.
\begin{definition}\label{def-trop}
Let $X\subset \mathbb{P}^{k}$ be an algebraic variety. Then the tropicalization $\trop(X)$ of $X$ is defined to be
$$ \trop(X)=\overline{-\val(X\cap (K^{\ast})^k)} \subset \mathbb{R}^k,$$
the Euclidean closure of the image of $X$ under the coordinate-wise negative valuation map.
\end{definition}

It is one the miracles of tropical geometry that $\trop(X)$ is a polyhedral complex. This polyhedral complex reflects properties of the algebraic variety $X$. In general there are no criteria for determining the realizability of a polyhedral complex as a tropicalization. 

Tropical linear spaces are natural generalizations of the polyhedral complexes obtained as tropicalizations of linear spaces. Their structure is encoded by tropical Pl\"ucker vectors (also called valuated matroids), as we now explain.

Let $\TT \coloneqq (\RR \cup \{-\infty\}, \max, +)$ be the {tropical semiring}. A vector $p \in \TT^{\binom{[n]}{m}}$ is called a {\em tropical Pl\"ucker vector} of rank $m$ if it satisfies the (quadratic) \emph{tropical Pl\"ucker relations}: for any $A \in \binom{[n]}{m-1}$ and $B \in \binom{[n]}{m+1}$, i.e., the maximum
\begin{equation*}\label{eqplucker}
 \max_{i \in B \setminus A} (p_{A+i} + p_{B-i})
\end{equation*}
is attained at least twice (or is equal to $-\infty$). The support $\supp(p) \coloneqq \{ I \in \tbinom{[n]}{m} \mid p_I \neq -\infty \}$ of $p$ is the collection of bases of a matroid on the ground set $[n]$, called the {\em underlying matroid} of $p$. 

Let $p \in \TT^{\binom{[n]}{m}}$ be a tropical Pl\"ucker vector. For any $B \in \tbinom{[n]}{m+1}$, let $c_B \in \TT^n$ be defined by
\begin{equation*}\label{defcircuit}
 (c_B)_i \coloneqq 
 \begin{cases}
  p_{B-i} & \text{ if $i \in B$,} \\
  -\infty & \text{ otherwise.}
 \end{cases}
\end{equation*}
When $c_B$ is not equal to $(-\infty, \dotsc, -\infty) \in \TT^n$, any vector of the form $c_B + \lambda \cdot \textbf{1}$ with $\lambda \in \RR$ is called a {\em (valuated) circuit} of $p$, where $\textbf{1} \coloneqq (1,\dotsc,1) \in \RR^n$. Its support $\supp(c_B + \lambda \cdot \textbf{1}) = \supp(c_B) \coloneqq \{ i \in [n] \mid (c_B)_i \neq -\infty \}$ is a circuit of the underlying matroid of $p$. If $\mathcal C(p)$ denotes the collection of circuits of $p$, the {\em tropical linear space} corresponding to $p$ is \begin{equation*}
\mathcal{L}(p) \coloneqq \left\{ x \in \TT^n : \text{ for all } c \in \mathcal C(p), \, \max_{i \in [n]}(x_i + c_i) \text{ is attained at least twice}\right\}.
\end{equation*}
The tropical linear space $\mathcal{L}(p)$ is a pure $m$-dimensional polyhedral complex, and it determines the tropical Pl\"ucker vector $p$ up to global tropical multiplication by a scalar. It follows that the set of all $m$-dimensional tropical linear spaces is parametrized by the {\em Dressian} $\Dr(m,n)$,
which is the subset of tropical projective space $\TP^{\binom{[n]}{m}-1}$ consisting of all tropical Pl\"ucker vectors of rank $m$; see \cite{HerrmannJensenJoswigSturmfels,MS15,OlartePanizzutSchroeter} for further details. (Here, $\TP^{\binom{[n]}{m}-1}$ is obtained by identifying vectors in $\TT^{\binom{[n]}{m}}$ that differ by a multiple of $(1,\ldots,1)$.)

Tropical linear spaces play the role of linear subspaces in tropical geometry, and generalize tropicalizations of classical linear subspaces. As above, let $K$ be a field with a non-Archimedean valuation, and consider the $n$-dimensional vector space $ K^n$. Suppose $X$ is an $m$-dimensional linear subspace of $K^n$ with Pl\"ucker coordinates $P \in \PP^{\tbinom{[n]}{m}-1}$, and let $p \in \TP^{\tbinom{[n]}{m}-1}$ be the tropical Pl\"ucker vector obtained by taking the coordinatewise valuation of $P$. Then the tropicalization of the linear subspace $X \subset K^n$ is precisely the tropical linear space $\mathcal{L}(p)$. Tropical Pl\"ucker vectors arising as valuations of classical Pl\"ucker vectors are called {\em realizable}, and they form the {\em tropical Grassmannian} $\trop(\Gr(m,n)) \subset \Dr(m,n)$.

The tropicalization of a linear subspace $X$ can be described more easily if $X$ is defined over a subfield on which the valuation is zero, e.g.,   $\CC\subset \CCt$. We call this the \emph{constant coefficient case}. Then, the tropicalization of $X$ restricted to $\RR^n$ is a fan called the \emph{Bergman fan}.
It only depends on the underlying matroid of $X$ (see \cite{Stu02}, \S\;9.3). 

Suppose $X$ is given as the rowspace of a matrix~$A$.
The matroid of $X$ (i.e., the matroid of $A$) can be specified by its collection of circuits, which are the minimal sets arising as supports of linear
forms in $\ker(A)$ (equivalently, minimal sets arising as supports of linear forms vanishing on $X$). These are minimal sets $\{i_1,\ldots,i_r\}\subset \{1,\ldots,n\}$ such that the columns $a_{i_1},\ldots,a_{i_r}$ of $A$ are linearly dependent.
  
   Given a vector $u\in \mathbb{R}^n$, let $\mathcal{F}(u)$ denote the unique \emph{flag of subsets}
   \begin{displaymath}
     \emptyset=: F_0 \subsetneq F_1 \subsetneq \ldots \subsetneq
     F_{m-1}\subsetneq F_m:= \{1,\ldots,n\}
   \end{displaymath}
   such that
   \begin{displaymath}
     u_i<u_j\quad\Longleftrightarrow\quad\exists\;k\;:\; i\in F_{k}
     \mbox{ and } j\not\in F_{k}\enspace .
   \end{displaymath}
   In particular,
   \begin{displaymath}
     u_i=u_j\quad\Longleftrightarrow\quad\exists\;k\;:\; i,j\in
     F_k\setminus F_{k-1}\enspace .
   \end{displaymath}
   The \emph{weight class} of a flag $\mathcal{F}$ is the set of all
   $u$ such that $\mathcal{F}(u)=\mathcal{F}$, which is a polyhedral cone in $\RR^n$.
   For example, the set of all
   vectors $u$ satisfying $u_3<u_1=u_4<u_2$ is the weight class in
   $\mathbb{R}^4$ corresponding to to the
   flag $\emptyset\subsetneq \{3\} \subsetneq \{1,3,4\} \subsetneq \{1,2,3,4\}$.

   A flag $\mathcal{F}$ is a \emph{flag of flats} of the matrix $A$ 
   (or of its associated matroid) if the linear span
   of the vectors $\{a_j \;|\; j\in F_i\}$ contains no $a_k$ with
   $k\notin F_i$. As before, the vectors $a_j$ denote the columns of $A$.
   
\begin{theorem}[{\cite[Theorem 1]{AK06}, \cite[Theorem 4.1]{FS05}}]\label{thm-Bergmanfan}
Let $X$ be a constant coefficient linear space, i.e., defined over a subfield with trivial valuation. The tropicalization of $X$, $\trop(X)$, equals the Bergman fan of the matroid
  of $X$, i.e., the union of all weight classes of flags of flats of
   the matroid of $X$.
\end{theorem}

The fan structure given by the flags of flats is the \emph{fine subdivision} of the Bergman fan. Generally, there exist fans with coarser structure and same support. By abuse of notation, we use the term Bergman fan for any fan supported on the tropicalization of our linear space, and do not necessarily fix a fan structure.


If $X$ is not a constant coefficient linear space, its tropicalization is not a fan, but a more general polyhedral complex which can be described in terms of the underlying valuated matroid. One can use the software \texttt{polymake} \cite{polymake} to compute tropicalizations of linear spaces with constant or non-constant coefficients, for further details see \cite{Rincon:2013} and \cite{HampeJoswigSchroeter:2018}. We used this to compute Example \ref{ex:valuated}, which has non-constant coefficients.

The interested reader will find more details about tropicalizations of linear spaces, their tropical moduli spaces, and their underlying (valuated) matroidal structure in \cite{Spe08, MS15, SchroeterThesis}.

\subsection{The tropicalization of the moduli space of codimension-1 spaces}
\label{subsec-tropicalization}

As the tropicalization of a constant coefficient linear subspace is the Bergman fan of its associated matroid (see Theorem~\ref{thm-Bergmanfan}), the following result describing the tropicalization of the locus of codimension-1 linear subspaces in a constant coefficient linear space $X$ is an immediate consequence of Theorem \ref{thm-main}. 
 
\begin{corollary}\label{cor-main}
Let $X \subset \mathbb{P}^{n-1}$ be a $d$-dimensional constant coefficient linear subspace, 
and $L(X) \subset \Gr(d,n) \subset \PP^{\binom{n}{d}-1}$ be the locus of $(d-1)$-dimensional linear subspaces contained in $X$. 
Then $\trop(L(X))$ is the Bergman fan of
\begin{enumerate}[label=(\roman*)]
	\item\label{item:tropcolsV}
		the matroid of dependencies among the columns of the matrix $V$, as defined in Equation \eqref{defV} (obtained from a matrix $W$ whose rowspace is $X$),
	\item\label{item:tropdualcolsU} 
		the dual matroid to the matroid of dependencies among the columns of the matrix $U$, as defined in Equation \eqref{defU}, and
	\item\label{item:troplines}
		the matroid of lines of the hyperplane arrangement $\mathcal A(X)$ induced by $X$ (see Definition \ref{def-matroidoflines}).
\end{enumerate}
\end{corollary}   
 
 \begin{remark}
 An analog of Theorem \ref{thm-main} also holds when $X$ has non-constant coefficients, see Remark \ref{rem-nonconstantcoefficients}. In this case, the tropicalization of $L(X)$ is encoded by the valuated matroid of the columns of $V$ or the dual valuated matroid of the columns of $U$, see e.g.\ Example \ref{ex:valuated}.
 \end{remark}
 
If $X_1$ and $X_2$ are two $(d+1)$-dimensional linear subspaces tropicalizing to the same Bergman fan $\mathcal{X}$, it is still possible that the tropicalization of the moduli spaces of $d$-dimensional subspaces $\trop(L(X_1))$ and $\trop(L(X_2))$ differ (see, for instance, Example \ref{ex-genericity} below, and Examples 3.3 and 3.4 in \cite{La18}). 
However, $\trop(L(X))$ is constant whenever $X$ is a subspace in the generic set (see Proposition \ref{prop-generic}). 

\begin{corollary}\label{cor-generic}
There is an open dense subset (in the Zariski topology) of the Grassmannian $\Gr_{\CC}(d+1,n)$ over the complex numbers $\CC$
such that for all $X$ in this subset (which are of constant coefficients) the Bergman fan $\trop(L(X))$ is constant. 
This subset is the set of generic subspaces in the sense of Proposition \ref{prop-generic}.
\end{corollary} 

\begin{proof}
This follows directly from Proposition \ref{prop-generic} and Corollary \ref{cor-main}, as the Bergman fan $\trop(L(X))$ only depends on the matroid of $L(X)$, which is constant for $X$ in the generic set. In particular, the subset occurring here is the same as in Proposition \ref{prop-generic}.
\end{proof} 
For an example of the generic subset of a Grassmannian, see Example \ref{ex-u36} above. In the next example, we relate our computation in Example \ref{ex-u36} of the space of generic planes for the Bergman fan of $U_{3,6}$ to Lamboglia's Fano schemes \cite{La18}.

\begin{example}\label{ex-genericity} Let $\mathcal{X}$ be the Bergman fan of $U_{3,6}$. In \cite{La18}, Lamboglia considers the space of all (realizable) tropical lines contained in $\mathcal{X}$, and which is a $3$-dimensional polyhedral complex that strictly contains the tropicalized spaces of lines that we study here. She also provides concrete examples to show the space $\trop(L(X))$ depends on the choice of the lift $X$ of $\mathcal{X}$: two different linear subspaces $X_1$ and $X_2$ tropicalizing to $\mathcal{X}$ are presented, and the tropicalizations of their spaces of lines $L(X_1)$ and $L(X_2)$ are computed.
The linear subspace $X_1$ is the rowspace of the matrix 
\begin{align*}
\begin{pmatrix}
0&-271&-92&0&-13&-54\\
0&-18&-7&-1&0&-4\\
-1&12293&4173&0&588&2450\\
\end{pmatrix}
\end{align*}
(see Example 3.3 in \cite{La18}). This subspace is generic in our sense: the relations given in Example \ref{ex-u36} are all nonzero (where we use the dehomogenized version involving $q_{123} = 1$). The linear subspace $X_2$ is the rowspace of the matrix
\begin{align*}
\begin{pmatrix}
1&3&0&1&5&7\\
0&0&1&3&-1&-1\\
1&4&-1&-3&0&0\\
\end{pmatrix}
\end{align*}
(see Example 3.4 in \cite{La18}). Even though all Pl\"ucker coordinates $q_I$ are still nonzero in this case, this subspace satisfies $q_{124}\cdot q_{356}-q_{123}\cdot q_{456}=0$.
This equation is one of our relations of Example \ref{ex-u36} when $q_{123}=1$.
It follows that $L(X_2)$ is non-generic, whereas $L(X_1)$ is. In particular, the Bergman fans of the matroids of $L(X_2)$ and $L(X_1)$ do not coincide. Examples 3.3 and 3.4 in \cite{La18} construct a tropical line $\Gamma$ in $\mathcal{X}$ which is not contained in the tropicalization of $L(X_1)$ but is contained in the tropicalization of $L(X_2)$. We will continue this example in Example \ref{ex-genericity2}.
\end{example}

The tropicalization of a generic $(d+1)$-dimensional subspace $X$ is the Bergman fan $\mathcal X$ of the uniform matroid $U_{d+1,n}$, and we use the notation $L^{\trop}(\mathcal{X})$ to denote the space $\trop(L(X))$ for such generic $X$. We call $L^{\trop}(\mathcal{X})$ the \emph{generic space of tropicalized $d$-dimensional subspaces for $\mathcal X$}. Making use of the results of Section \ref{sec-genericity}, it follows that $L^{\trop}(\mathcal{X})$ is the Bergman fan of a Dilworth truncation.
 
 \begin{corollary}\label{cor-DW}
If $X \in \Gr(d+1,n)$ is a generic linear subspace
(see Proposition~\ref{prop-generic}), then $\trop(L(X))=L^{\trop}(\mathcal{X})$ is the Bergman fan of $\widetilde \Dil_{n-d}(U_{n,n})$.
\end{corollary}
\begin{proof}
This follows directly from Theorem \ref{theorem-DW} and Corollary \ref{cor-main}.
\end{proof}

In the case that the linear subspace $X$ is not generic, Corollary \ref{cor-main} describes the tropicalization $\trop(L(X))$ as the Bergman fan of the matroid of lines of hyperplane arrangement $\mathcal A(X)$. 

 \begin{example}\label{ex-genericity2}
We continue Example \ref{ex-genericity}, and compute the matroids that correspond to the two tropicalized Fano schemes.
The linear subspace $X_1$ 
is generic, so we can deduce that the matroid of $L(X_1)$ is the relabeled $4$-th Dilworth truncation $\widetilde \Dil_4 (U_{6,6})$, by Corollary \ref{cor-DW}.
It is the matroid with $15$ elements that arises from intersecting six generic lines in~$\mathbb{P}^2$.

The linear subspace $X_2$ 
is not generic, so we cannot apply Corollary \ref{cor-DW} to compute the matroid of $L(X_2)$. 
Nevertheless, this matroid is the matroid of lines of the hyperplane arrangement $\mathcal A(X_2)$, following Theorem \ref{thm-main}. It is the matroid of the point configuration arising as the intersections of six lines in $\mathbb{P}^2$, but these lines are not generic --- concretely, the points $p_{12}$, $p_{34}$ and $p_{56}$ lie on a line. This is similar to the situation of Example~\ref{ex:matroid_of_lines} that we proceed to analyze in Example~\ref{ex:degenerate_planeR3}.
\end{example}

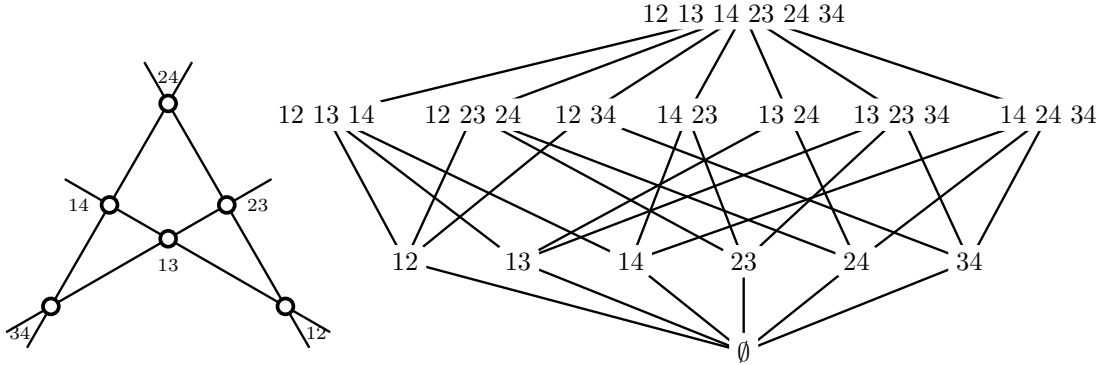
\begin{figure}
\begin{center}
\begin{tikzpicture}[scale = 1.5,
                    edge/.style={black, line width=0.9pt, line cap=round},
                    color = {black}]


  \tikzstyle{vertex}=[draw=black, fill=white, line width=1.4pt]
  \coordinate (trans) at (-.6,-.2);
  
  \coordinate (p13) at (trans); 
  \coordinate (p24) at ($(90:1.2)+(trans)$); 
  \coordinate (p12) at ($(330:1.2)+(trans)$); 
  \coordinate (p23) at ($.5*(90:1.2)+.5*(330:1.2)+(trans)$); 
  \coordinate (p34) at ($(210:1.2)+(trans)$); 
  \coordinate (p14) at ($.5*(p24)+.5*(p34)$); 

  \node[label=below right:{\tiny $12$}]  at (p12) {};
  \node[label=below:{\tiny $13$}]  at (p13) {};
  \node[label=left:{\tiny $14$}]  at (p14) {};
  \node[label=right:{\tiny $23$}]  at (p23) {};
  \node[label=above:{\tiny $24$}]  at (p24) {};
  \node[label=below left:{\tiny $34$}]  at (p34) {};

  \draw[edge] ($1.2*(p24)-.2*(p34)$) -- ($1.2*(p34)-.2*(p24)$);
  \draw[edge] ($1.2*(p24)-.2*(p12)$) -- ($1.2*(p12)-.2*(p24)$);
  \draw[edge] ($1.25*(p14)-.25*(p12)$) -- ($1.25*(p12)-.25*(p14)$);
  \draw[edge] ($1.25*(p23)-.25*(p34)$) -- ($1.25*(p34)-.25*(p23)$);

  \draw[vertex] (p12) circle (2pt);
  \draw[vertex] (p13) circle (2pt);
  \draw[vertex] (p14) circle (2pt);
  \draw[vertex] (p23) circle (2pt);
  \draw[vertex] (p24) circle (2pt);
  \draw[vertex] (p34) circle (2pt);


\tikzstyle{node}=[text=black, inner sep=3pt, rectangle, rounded corners=3pt,fill=white, draw=none]

\coordinate (H1) at (0.8,.9);
\coordinate (H2) at (2.1,.9);
\coordinate (H5) at (3.1,.9);
\coordinate (H7) at (4,.9);
\coordinate (H6) at (4.9,.9);
\coordinate (H3) at (5.9,.9);
\coordinate (H4) at (7.2,.9);
\coordinate (v12) at (1.5,-.4);
\coordinate (v13) at (2.5,-.4);
\coordinate (v14) at (3.5,-.4);
\coordinate (v23) at (4.5,-.4);
\coordinate (v24) at (5.5,-.4);
\coordinate (v34) at (6.5,-.4);
\coordinate (b) at (4.5,-1.2);
\coordinate (t) at (4.5,1.8);

\foreach \i/\k in {1/12,1/13,1/14,2/12,2/23,2/24,3/13,3/23,3/34,4/14,4/24,4/34,5/12,5/34,6/13,6/24,7/14,7/23} {
   \draw[edge] (H\i) -- (v\k);
   }

\foreach \i in {1,2,3,4,5,6,7} {
   \draw[edge] (H\i) -- (t);
  }
  
\foreach \k in {12,13,14,23,24,34} {
   \draw[edge] (v\k) -- (b);
  }

\node[node]  at (b) {\small $\emptyset$};

 \node[node] at (H1) {\small $12$ $13$ $14$};
 \node[node] at (H2) {\small $12$ $23$ $24$};
 \node[node] at (H3) {\small $13$ $23$ $34$};
 \node[node] at (H4) {\small $14$ $24$ $34$};
 
 \node[node] at (H5) {\small $12$ $34$};
 \node[node] at (H6) {\small $13$ $24$};
 \node[node] at (H7) {\small $14$ $23$};
  
\foreach \k in {12,13,14,23,24,34} {
   \node[node] at (v\k) {\small $\k$};
  }

\node[node] at (t) {\small $12$ $13$ $14$ $23$ $24$ $34$};

\end{tikzpicture}
\caption{The affine point configuration realizing the (relabeled) Dilworth truncation $\widetilde{\Dil}_{2}(U_{4,4})$ , which is the matroid of $L^{\trop}(\mathcal{X})$ for the Bergman fan $\mathcal X$ of the matroid $U_{3,4}$ on the left. And its lattice of flats on the right. }\label{fig-DW}
\end{center}
\end{figure}

As mentioned in the introduction, the next example shows that, in general, there is no identification of the plane $\mathcal{X}$ with its tropicalized dual plane $L^{\trop}(\mathcal{X})$, as their associated matroids might not agree.

\begin{example}\label{ex-standardplaneR3}
Let $\mathcal{X}$ be the Bergman fan of $U_{3,4}$, i.e., the standard tropical plane in $\mathbb{R}^3$ (see the left part of Figure \ref{fig-U34}).
By Theorem \ref{theorem-DW}, the matroid of the generic space of tropicalized lines $L^{\trop}(\mathcal{X})$ is equal to the Bergman fan of $\widetilde{\Dil}_{2}(U_{4,4})$. As discussed in Example \ref{ex-n-2DWgeom}, the underlying matroid $\widetilde{\Dil}_{2}(U_{4,4})$ is realized by six points labeled $12$, $13$, $14$, $23$, $24$, $34$ in $\mathbb{P}^2$, such that, e.g., $12$, $13$, $14$ lie on a line, etc. This matroid is illustrated on the left in Figure \ref{fig-DW}.

The Bergman fan of this matroid equals the generic space of tropicalized lines $L^{\trop}(\mathcal{X})$. In its fine subdivision, it has 13 rays corresponding to the 13 flats of $\widetilde \Dil_{2}(U_{4,4})$ --- 6 points, 4 lines going through 3 points each, and 3 lines going through 2 points. For the lattice of flats, see the right part of Figure \ref{fig-DW}. In this Bergman fan, there is a $2$-dimensional cone spanned by two rays if and only if the two rays correspond to a point and a line on such that the point is incident with the line. This includes the lines that only have two incident points, e.g.\ the line with the points $12$ and $34$.
The link of this Bergman fan is the Petersen graph, with three additional vertices corresponding to the three lines through only two points (see Figure \ref{fig-Bergman1}).
We obtain the coarse subdivision by dropping those three extra vertices. 

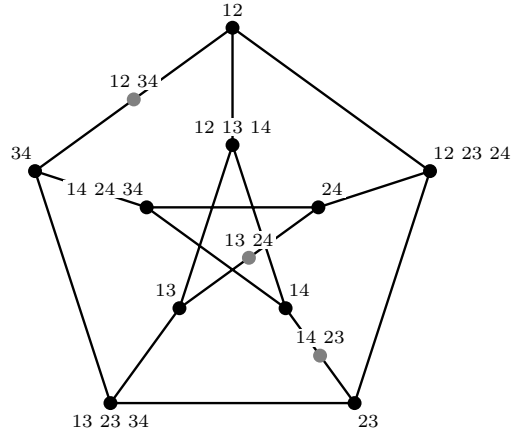
\begin{figure}[ht]
\begin{center}
\begin{tikzpicture}[scale = 1.2,
                    edge/.style={black, line width=0.9pt, line cap=round},
                    vertex/.style = {text=black, inner sep=2pt, white, draw=none}
                    ]

  \coordinate (p1) at (90:1); 
  \coordinate (p2) at (18:2.3); 
  \coordinate (p3) at (234:2.3); 
  \coordinate (p4) at (162:1); 

  \coordinate (p12) at (90:2.3); 
  \coordinate (p13) at (234:1); 
  \coordinate (p14) at (306:1); 
  \coordinate (p23) at (306:2.3); 
  \coordinate (p24) at (18:1); 
  \coordinate (p34) at (162:2.3);

  \draw[edge] (p12) -- (p34) -- (p3) -- (p23) -- (p2) -- cycle; 
  \draw[edge] (p1) -- (p13) -- (p24) -- (p4) -- (p14) -- cycle; 
  \draw[edge] (p1) -- (p12); 
  \draw[edge] (p2) -- (p24); 
  \draw[edge] (p3) -- (p13); 
  \draw[edge] (p4) -- (p34); 
  \draw[edge] (p14) -- (p23); 

  \fill (p1) circle [radius=2.2pt];
  \fill (p2) circle [radius=2.2pt];
  \fill (p3) circle [radius=2.2pt];
  \fill (p4) circle [radius=2.2pt];

  \fill (p12) circle [radius=2.2pt];
  \fill (p13) circle [radius=2.2pt];
  \fill (p14) circle [radius=2.2pt];
  \fill (p23) circle [radius=2.2pt];
  \fill (p24) circle [radius=2.2pt];
  \fill (p34) circle [radius=2.2pt];
  \fill[gray] ($.5*(p12)+.5*(p34)$) circle [radius=2.2pt];
  \fill[gray] ($.5*(p13)+.5*(p24)$) circle [radius=2.2pt];
  \fill[gray] ($.5*(p14)+.5*(p23)$) circle [radius=2.2pt];
  
  \node[above, fill=white, inner sep=1pt] at ($(p1)+(0,0.1)$) {\tiny $12$ $13$ $14$};
  \node[above right, fill=white, inner sep=1pt] at ($(p2)+(0,0.1)$) {\tiny $12$ $23$ $24$};
  \node[below, fill=white, inner sep=1pt] at ($(p3)-(0,0.1)$) {\tiny $13$ $23$ $34$};
  \node[above left, fill=white, inner sep=1pt] at ($(p4)+(0,0.1)$) {\tiny $14$ $24$ $34$};

  \node[above, fill=white, inner sep=1pt] at ($(p12)+(0,0.1)$){\tiny $12$};
  \node[above left, fill=white, inner sep=1pt] at ($(p13)+(0,0.1)$){\tiny $13$}; 
  \node[above right, fill=white, inner sep=1pt] at ($(p14)+(0,0.1)$){\tiny $14$};
  \node[below right, fill=white, inner sep=1pt] at ($(p23)-(0,0.1)$){\tiny $23$};    
  \node[above right, fill=white, inner sep=1pt] at ($(p24)+(0,0.1)$){\tiny $24$};        
  \node[above left, fill=white, inner sep=1pt] at ($(p34)+(0,0.1)$){\tiny $34$};    
  
  \node[above, fill=white, inner sep=1pt] at ($.5*(p12)+.5*(p34)+(0,0.1)$) {\tiny $12$ $34$};
  \node[above, fill=white, inner sep=1pt] at ($.5*(p13)+.5*(p24)+(0,0.1)$) {\tiny $13$ $24$};
  \node[above, fill=white, inner sep=1pt] at ($.5*(p14)+.5*(p23)+(0,0.1)$) {\tiny $14$ $23$};

\end{tikzpicture}
\caption{The link of the Bergman fan of $\widetilde\Dil_{2}(U_{4,4})$ with the fine subdivision agrees with the link of the moduli space $L^{\trop}(\mathcal{X})$, and is the Petersen graph with $3$ extra vertices (gray).}\label{fig-Bergman1}
\end{center}
\end{figure}

We now describe exactly how the Bergman fan $L^{\trop}(\mathcal{X})$ parametrizes lines inside $\mathcal X$. 

The fan $\mathcal{X}$ in its coarse subdivision has four rays, say $1$, $2$, $3$, and $4$. To obtain its fine subdivision we subdivide it by adding the six rays $12$, $13$, $14$, $23$, $24$, $34$, i.e., we subdivide each of the six $2$-dimensional cones, obtaining a total of $12$ maximal cones. Each such $2$-dimensional cone of $\mathcal{X}$ is spanned by two rays, of the form $i$ and $ij$. The fan $\mathcal{X}$ in its fine subdivision 
is depicted in the left part of Figure \ref{fig-U34}.

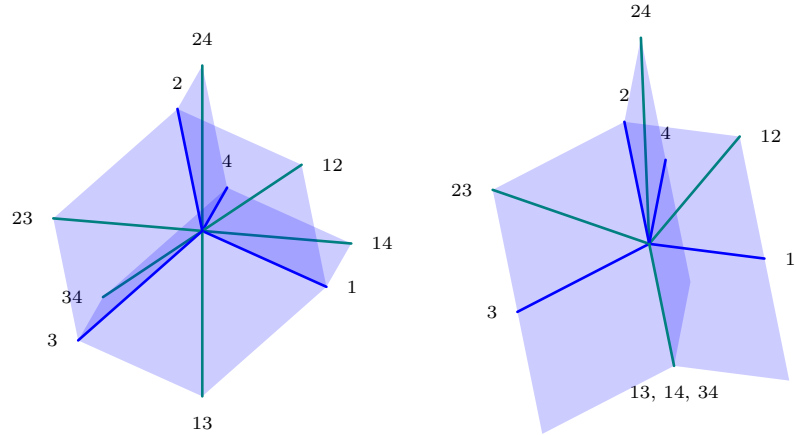
\begin{figure}[ht]
\begin{center}
\begin{tikzpicture}[
	x  = {(-0.9cm,0.076cm)},
        y  = {(0.0cm,-1cm)},
        z  = {(.6cm,.4cm)},
  edge/.style={line width=1pt, line cap=round, teal},
  edge2/.style={line width=1pt, line cap=round, blue},
  face/.style={draw=none, opacity=0.2, fill=blue},
  scale=1.1
  ]

  \coordinate (p0) at (0,0,0); 
  \coordinate (p1) at (-1,1,1); 
  \coordinate (p2) at (1,-1,1); 
  \coordinate (p3) at (1,1,-1); 
  \coordinate (p4) at (-1,-1,-1); 

  \coordinate (p12) at ($(p1)+(p2)$);
  \coordinate (p13) at ($(p1)+(p3)$);
  \coordinate (p14) at ($(p1)+(p4)$);
  \coordinate (p23) at ($(p2)+(p3)$);
  \coordinate (p24) at ($(p2)+(p4)$);
  \coordinate (p34) at ($(p3)+(p4)$);

  \node[label = right:{\tiny $1$}]  at (p1) {};
  \node[label = above:{\tiny $2$}]  at (p2) {};
  \node[label = left:{\tiny $3$}]  at (p3) {};
  \node[label = above:{\tiny $4$}]  at (p4) {};

  \node[label = right:{\tiny $12$}]  at (p12) {};
  \node[label = below:{\tiny $13$}]  at (p13) {};
  \node[label = right:{\tiny $14$}]  at (p14) {};
  \node[label = left:{\tiny $23$}]  at (p23) {};
  \node[label = above:{\tiny $24$}]  at (p24) {};
  \node[label = left:{\tiny $34$}]  at (p34) {};

  \draw[face] (p0) -- (p2) -- (p24) -- (p4) -- cycle;
  \draw[edge] (p0) -- (p24); 
  \draw[face] (p0) -- (p1) -- (p14) -- (p4) -- cycle;
  \draw[edge] (p0) -- (p14); 
  \draw[face] (p0) -- (p3) -- (p34) -- (p4) -- cycle;
  \draw[edge] (p0) -- (p34); 
  \draw[edge2] (p0) -- (p4); 

  \draw[face] (p0) -- (p1) -- (p12) -- (p2) -- cycle; 
  \draw[edge] (p0) -- (p12);
  \draw[edge2] (p0) -- (p1);

  \draw[face] (p0) -- (p1) -- (p13) -- (p3) -- cycle; 
  \draw[edge] (p0) -- (p13);

  \draw[face] (p0) -- (p2) -- (p23) -- (p3) -- cycle; 
  \draw[edge] (p0) -- (p23);
  \draw[edge2] (p0) -- (p2); 
  \draw[edge2] (p0) -- (p3);

  \coordinate (trans) at (-6,-.3,0);
  \coordinate (q0) at ($(0,0,0)+(trans)$); 
  \coordinate (q2) at ($(1,-1,1)+(trans)$); 

  \coordinate (q1) at ($(-.66,.66,1.33)+(trans)$);
  \coordinate (q3) at ($(1.33,.66,-.66)+(trans)$);
  \coordinate (q4) at ($(-.66,-1.33,-.66)+(trans)$);
    
  \coordinate (q12) at ($(q1)+(q2)-(trans)$);
  \coordinate (q23) at ($(q3)+(q2)-(trans)$);
  \coordinate (q24) at ($(q4)+(q2)-(trans)$);
    
  \coordinate (qX) at ($(-1,1,-1)+(trans)$);

  \node[label = right:{\tiny $12$}]  at (q12) {};
  \node[label = left:{\tiny $23$}]  at (q23) {};
  \node[label = above:{\tiny $24$}]  at (q24) {};
  \node[label = below:{\tiny $13$, $14$, $34$}]  at (qX) {};
  \node[label=above:{\tiny $2$}]  at (q2) {};
  
  \node[label= above:{\tiny $4$}]  at ($(q24)+(qX)-(trans)$) {};
  \node[label= right:{\tiny $1$}]  at ($(q12)+(qX)-(trans)$) {};
  \node[label= left:{\tiny $3$}]  at ($(q23)+(qX)-(trans)$) {};
  
  \draw[face] (q24) -- (q2) -- (qX) -- ($(q4)+(qX)-(trans)$) -- cycle;
  \draw[edge] (q0) -- (q24);
  \draw[face] (q12) -- (q2) -- (qX) -- ($(q1)+(qX)-(trans)$) -- cycle;
  \draw[edge] (q0) -- (q12);

  \draw[edge2] (q0) -- (q1); 
  \draw[edge2] (q0) -- (q3); 
  \draw[edge2] (q0) -- (q4); 

  \draw[face] (q23) -- (q2) -- (qX) -- ($(q3)+(qX)-(trans)$) -- cycle;
  \draw[edge] (q0) -- (q23);
  \draw[edge2] (q0) -- (q2); 
  \draw[edge] (q0) -- (qX);

\end{tikzpicture}
\caption{The Bergman fans $\mathcal{X}$ of Example~\ref{ex-standardplaneR3}  (left) and $\mathcal{X}'$ of Example~\ref{ex:degenerate_planeR3} (right) in their fine subdivisions.}\label{fig-U34}
\end{center}
\end{figure}

We computed the positions of the vertices of a tropical line in $\mathbb{R}^3$ from its Pl\"ucker coordinates (using, e.g.\ \cite[Example 4.3.19]{MS15}). 

For a ray of the form $\{ij\}$ of the Bergman fan $L^{\trop}(\mathcal{X})$, the Pl\"ucker vector corresponding to its primitive generating vector has a $-1$ in coordinate $ij$ and $0$ otherwise. The corresponding tropical line in $\mathcal X$ has thus vertices at the origin $(0,0,0,0)$ and $-(e_i+e_j)$. 
This ray thus parametrizes tropical lines with one vertex at the vertex of $\mathcal{X}$ and one vertex in the ray $ij$. 
A ray of the form $\{ij, ik, il\}$ yields a Pl\"ucker vector with coordinates $-1$ in those three entries and $0$ otherwise. This corresponds to a $4$-valent tropical line with vertex at $-e_i$.
This ray thus parametrizes tropical lines in $\mathcal X$ with a $4$-valent vertex at the ray $i$ of $\mathcal{X}$. We therefore choose to relabel it using the letter $i$ (see the left part of Figure \ref{fig-Bergman2}).

Analogously, for a ray of the form $\{ij, kl\}$, we obtain the tropical line with vertices at $-(e_i+e_j)$ and $-(e_k+e_l)$. These are the rays we drop in the coarse subdivision.
The ray parametrizes tropical lines whose bounded edge passes through the vertex of $\mathcal{X}$, such that it is divided into two equal parts by the vertex of $\mathcal{X}$. 
The tropical lines in the two adjacent $2$-dimensional cones differ just by which part of the bounded edge is longer. Since this is not a combinatorial difference, from the point of view of combinatorial types of lines in $\mathcal{X}$ it hence makes sense to drop these $3$ extra rays and use the coarse subdivision for the Bergman fan $L^{\trop}(\mathcal{X})$.

Having discussed the lines corresponding to the rays of the Bergman fan $L^{\trop}(\mathcal{X})$, we can now turn to its $2$-dimensional cones. The $2$-dimensional cones spanned by rays of the form $i$ and $ij$ parametrize tropical lines with one vertex on the ray $i$ and one vertex in the cone spanned by $i$ and $ij$ of $\mathcal{X}$. There are 12 such $2$-dimensional cones, corresponding to the twelve $2$-dimensional cones of $\mathcal{X}$ that contain one vertex of the line in their interior.
The $2$-dimensional cones spanned by rays of the form $ij$ and $kl$ parametrize tropical lines whose bounded edge passes through the vertex of $\mathcal{X}$. There are $3$ such $2$-dimensional cones, corresponding to the three directions that a bounded edge of a tropical line may have.

We can see how the fifteen $2$-dimensional cones of the fan $L^{\trop}(\mathcal{X})$ (in its coarse subdivision) correspond to combinatorial types of tropical lines in $\mathcal{X}$. The fan, using the coarsening and relabeling from above (i.e., dropping the three extra rays and denoting a ray $ij$, $ik$, $il$ by $i$ instead) is depicted in the left part of Figure \ref{fig-Bergman2}. The combinatorial types of the lines in $\mathcal X$ parametrized by the rays of $L^{\trop}(\mathcal{X})$ are depicted in Figure \ref{fig-combtypes} (up to symmetry). The combinatorial types of the lines in $\mathcal X$ parametrized by the $2$-dimensional cones of $L^{\trop}(\mathcal{X})$ are depicted in Figure \ref{fig-combtypes2} (again up to symmetry).

Note that in this example all tropical lines contained in $\mathcal X$ are actually obtained as the tropicalization of a classical line in $X$, for any generic lift $X$ of $\mathcal X$. This is not the case in general, as we discussed, for instance, in Example \ref{ex-genericity}.

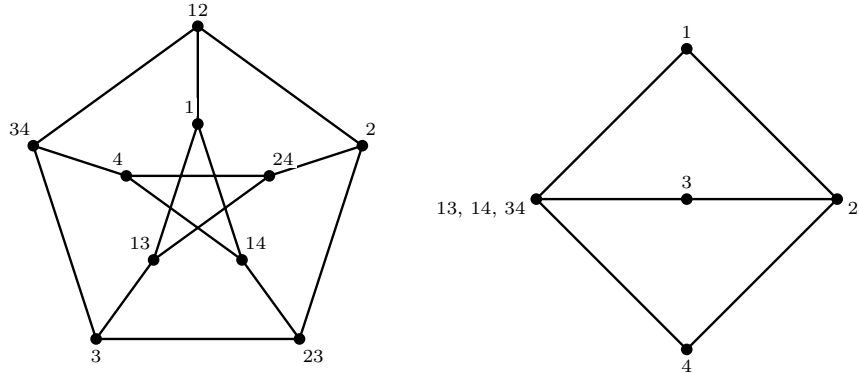
\begin{figure}[ht]
\begin{center}
\begin{tikzpicture}[scale = 1,
                    edge/.style={black, line width=0.9pt, line cap=round},
                    vertex/.style = {text=black, inner sep=2pt, white, draw=none}
                    ]

  \coordinate (p1) at (90:1); 
  \coordinate (p2) at (18:2.3); 
  \coordinate (p3) at (234:2.3); 
  \coordinate (p4) at (162:1); 

  \coordinate (p12) at (90:2.3); 
  \coordinate (p13) at (234:1); 
  \coordinate (p14) at (306:1); 
  \coordinate (p23) at (306:2.3); 
  \coordinate (p24) at (18:1); 
  \coordinate (p34) at (162:2.3);

  \draw[edge] (p12) -- (p34) -- (p3) -- (p23) -- (p2) -- cycle; 
  \draw[edge] (p1) -- (p13) -- (p24) -- (p4) -- (p14) -- cycle; 
  \draw[edge] (p1) -- (p12); 
  \draw[edge] (p2) -- (p24); 
  \draw[edge] (p3) -- (p13); 
  \draw[edge] (p4) -- (p34); 
  \draw[edge] (p14) -- (p23); 

  \fill (p1) circle [radius=2.2pt];
  \fill (p2) circle [radius=2.2pt];
  \fill (p3) circle [radius=2.2pt];
  \fill (p4) circle [radius=2.2pt];

  \fill (p12) circle [radius=2.2pt];
  \fill (p13) circle [radius=2.2pt];
  \fill (p14) circle [radius=2.2pt];
  \fill (p23) circle [radius=2.2pt];
  \fill (p24) circle [radius=2.2pt];
  \fill (p34) circle [radius=2.2pt];

  \node[above left, fill=white, inner sep=1pt] at ($(p1)+(0,0.1)$) {\tiny $1$};
  \node[above right, fill=white, inner sep=1pt] at ($(p2)+(0,0.1)$) {\tiny $2$};
  \node[below, fill=white, inner sep=1pt] at ($(p3)-(0,0.1)$) {\tiny $3$};
  \node[above left, fill=white, inner sep=1pt] at ($(p4)+(0,0.1)$) {\tiny $4$};

  \node[above, fill=white, inner sep=1pt] at ($(p12)+(0,0.1)$){\tiny $12$};
  \node[above left, fill=white, inner sep=1pt] at ($(p13)+(0,0.1)$){\tiny $13$}; 
  \node[above right, fill=white, inner sep=1pt] at ($(p14)+(0,0.1)$){\tiny $14$};
  \node[below right, fill=white, inner sep=1pt] at ($(p23)-(0,0.1)$){\tiny $23$};    
  \node[above right, fill=white, inner sep=1pt] at ($(p24)+(0,0.1)$){\tiny $24$};        
  \node[above left, fill=white, inner sep=1pt] at ($(p34)+(0,0.1)$){\tiny $34$};


  \coordinate (trans) at (6.5,0);
  
  \coordinate (q2) at ($(0:2)+(trans)$); 
  \coordinate (q12) at ($(90:2)+(trans)$); 
  \coordinate (q24) at ($(270:2)+(trans)$); 
  \coordinate (q23) at ($(0:0)+(trans)$); 
  \coordinate (qX) at ($(180:2)+(trans)$);

  \node[below right, fill=white, inner sep=1pt] at ($(q2)+(0.1,0)$) {\tiny $2$};
  \node[below left, fill=white, inner sep=1pt] at ($(qX)-(0.1,0)$) {\tiny $13$, $14$, $34$};
  \node[above, fill=white, inner sep=1pt] at ($(q12)+(0,0.1)$) {\tiny $1$};
  \node[above, fill=white, inner sep=1pt] at ($(q23)+(0,0.1)$) {\tiny $3$};
  \node[below, fill=white, inner sep=1pt] at ($(q24)-(0,0.1)$) {\tiny $4$};
  
  \draw[edge] (q12) -- (q2) -- (q23) -- (qX) -- (q24) -- (q2); 
  \draw[edge] (q12) -- (qX); 

  \fill (q2) circle [radius=2.2pt];
  \fill (q12) circle [radius=2.2pt];
  \fill (q23) circle [radius=2.2pt];
  \fill (q24) circle [radius=2.2pt];
  \fill (qX) circle [radius=2.2pt];

\end{tikzpicture}
\caption{The link of the Bergman fan $L^{\trop}(\mathcal{X})$, on the left, in its coarse subdivision with relabeled rays (Example \ref{ex-standardplaneR3}). 
The link of the Bergman fan $\trop(L(X))$, on the right, in its coarse subdivision with relabeled rays (Example \ref{ex:degenerate_planeR3}). 
}\label{fig-Bergman2}
\end{center}
\end{figure}

\begin{figure}[ht]
\begin{center}
\begin{tikzpicture}[
	x  = {(-0.9cm,0.076cm)},
        y  = {(0.0cm,-1cm)},
        z  = {(.6cm,.4cm)},
  edge/.style={line width=.8pt, line cap=round, teal},
  edge2/.style={line width=.8pt, line cap=round, blue},
  face/.style={draw=none, opacity=0.2, fill=blue},
  vertex/.style={draw = red, fill = red, line width=1.5pt},
  edge3/.style={line width=1.3pt, line cap=round, red, dotted},
  scale=1.2
  ]

  \coordinate (p0) at (0,0,0); 
  \coordinate (p1) at (-1,1,1); 
  \coordinate (p2) at (1,-1,1); 
  \coordinate (p3) at (1,1,-1); 
  \coordinate (p4) at (-1,-1,-1); 

  \coordinate (p12) at ($(p1)+(p2)$);
  \coordinate (p13) at ($(p1)+(p3)$);
  \coordinate (p14) at ($(p1)+(p4)$);
  \coordinate (p23) at ($(p2)+(p3)$);
  \coordinate (p24) at ($(p2)+(p4)$);
  \coordinate (p34) at ($(p3)+(p4)$);

  \node[label = right:{\tiny $1$}]  at (p1) {};
  \node[label = above:{\tiny $2$}]  at (p2) {};
  \node[label = left:{\tiny $3$}]  at (p3) {};
  \node[label = above:{\tiny $4$}]  at (p4) {};

  \node[label = right:{\tiny $12$}]  at (p12) {};
  \node[label = below:{\tiny $13$}]  at (p13) {};
  \node[label = right:{\tiny $14$}]  at (p14) {};
  \node[label = left:{\tiny $23$}]  at (p23) {};
  \node[label = above:{\tiny $24$}]  at (p24) {};
  \node[label = left:{\tiny $34$}]  at (p34) {};

  \draw[face] (p0) -- (p2) -- (p24) -- (p4) -- cycle;
  \draw[edge] (p0) -- (p24); 
  \draw[face] (p0) -- (p1) -- (p14) -- (p4) -- cycle;
  \draw[edge] (p0) -- (p14); 
  \draw[face] (p0) -- (p3) -- (p34) -- (p4) -- cycle;
  \draw[edge] (p0) -- (p34); 
  \draw[edge2] (p0) -- (p4); 

  \draw[edge3] ($0.4*(p1)$) -- ($(p4)+.4*(p1)$);

  \draw[face] (p0) -- (p1) -- (p12) -- (p2) -- cycle; 
  \draw[edge] (p0) -- (p12);
  \draw[edge2] (p0) -- (p1);

  \draw[face] (p0) -- (p1) -- (p13) -- (p3) -- cycle; 
  \draw[edge] (p0) -- (p13);

  \draw[face] (p0) -- (p2) -- (p23) -- (p3) -- cycle; 
  \draw[edge] (p0) -- (p23);
  \draw[edge2] (p0) -- (p2); 
  \draw[edge2] (p0) -- (p3);

  \draw[edge3] ($0.4*(p1)$) -- (p1); 
  \draw[edge3] ($0.4*(p1)$) -- ($(p3)+.4*(p1)$); 
  \draw[edge3] ($0.4*(p1)$) -- ($(p2)+.4*(p1)$);
  \draw[vertex] ($0.4*(p1)$) circle (.6pt);

  \coordinate (trans) at (-6,-.4,0);
  \coordinate (q0) at ($(0,0,0)+(trans)$); 
  \coordinate (q1) at ($(-1,1,1)+(trans)$); 
  \coordinate (q2) at ($(1,-1,1)+(trans)$); 
  \coordinate (q3) at ($(1,1,-1)+(trans)$); 
  \coordinate (q4) at ($(-1,-1,-1)+(trans)$); 

  \coordinate (q12) at ($(q1)+(q2)-(trans)$);
  \coordinate (q13) at ($(q1)+(q3)-(trans)$);
  \coordinate (q14) at ($(q1)+(q4)-(trans)$);
  \coordinate (q23) at ($(q2)+(q3)-(trans)$);
  \coordinate (q24) at ($(q2)+(q4)-(trans)$);
  \coordinate (q34) at ($(q3)+(q4)-(trans)$);

  \node[label = right:{\tiny $1$}]  at (q1) {};
  \node[label = above:{\tiny $2$}]  at (q2) {};
  \node[label = left:{\tiny $3$}]  at (q3) {};
  \node[label = above:{\tiny $4$}]  at (q4) {};

  \node[label = right:{\tiny $12$}]  at (q12) {};
  \node[label = below:{\tiny $13$}]  at (q13) {};
  \node[label = right:{\tiny $14$}]  at (q14) {};
  \node[label = left:{\tiny $23$}]  at (q23) {};
  \node[label = above:{\tiny $24$}]  at (q24) {};
  \node[label = left:{\tiny $34$}]  at (q34) {};

  \draw[face] (q0) -- (q2) -- (q24) -- (q4) -- cycle;
  \draw[edge] (q0) -- (q24); 
  \draw[face] (q0) -- (q1) -- (q14) -- (q4) -- cycle;
  \draw[edge] (q0) -- (q14); 
  \draw[face] (q0) -- (q3) -- (q34) -- (q4) -- cycle;
  \draw[edge] (q0) -- (q34); 
  \draw[edge2] (q0) -- (q4); 

  \draw[edge3] (q0) -- (q4);

  \draw[face] (q0) -- (q1) -- (q12) -- (q2) -- cycle; 
  \draw[edge] (q0) -- (q12);
  \draw[edge2] (q0) -- (q1);

  \draw[face] (q0) -- (q1) -- (q13) -- (q3) -- cycle; 
  \draw[edge] (q0) -- (q13);

  \draw[face] (q0) -- (q2) -- (q23) -- (q3) -- cycle; 
  \draw[edge] (q0) -- (q23);
  \draw[edge2] (q0) -- (q2); 
  \draw[edge2] (q0) -- (q3); 

  \draw[edge3] (q0) -- (q2);
  \draw[edge3] ($0.4*(q13)+0.6*(trans)$) -- ($0.4*(q13)+0.6*(q1)$); 
  \draw[edge3] ($0.4*(q13)+0.6*(trans)$) -- ($0.4*(q13)+0.6*(q3)$);
  \draw[edge3] (q0) -- ($0.4*(q13)+0.6*(trans)$); 
  \draw[vertex] ($0.4*(q13)+0.6*(trans)$) circle (.6pt);
  \draw[vertex] (q0) circle (.6pt);
  
\end{tikzpicture}
\caption{The combinatorial type of the tropical lines parametrized by points on the rays labeled by $1$ and $13$ respectively in the moduli space $L^{\trop}(\mathcal{X})$ in its coarse subdivision (see the left part of Figure \ref{fig-Bergman2} and Example~\ref{ex-standardplaneR3}).
}\label{fig-combtypes}
\end{center}
\end{figure}
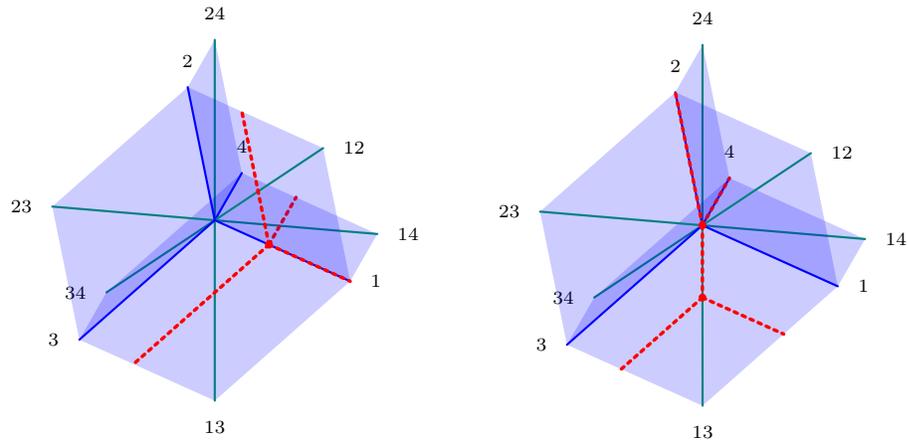
\begin{figure}
\begin{center}
\begin{tikzpicture}[
	x  = {(-0.9cm,0.076cm)},
        y  = {(0.0cm,-1cm)},
        z  = {(.6cm,.4cm)},
  edge/.style={line width=.8pt, line cap=round, teal},
  edge2/.style={line width=.8pt, line cap=round, blue},
  face/.style={draw=none, opacity=0.2, fill=blue},
  vertex/.style={draw = red, fill = red, line width=1.5pt},
  edge3/.style={line width=1.3pt, line cap=round, red, dotted},
  scale=1.2
  ]

  \coordinate (p0) at (0,0,0); 
  \coordinate (p1) at (-1,1,1); 
  \coordinate (p2) at (1,-1,1); 
  \coordinate (p3) at (1,1,-1); 
  \coordinate (p4) at (-1,-1,-1); 

  \coordinate (p12) at ($(p1)+(p2)$);
  \coordinate (p13) at ($(p1)+(p3)$);
  \coordinate (p14) at ($(p1)+(p4)$);
  \coordinate (p23) at ($(p2)+(p3)$);
  \coordinate (p24) at ($(p2)+(p4)$);
  \coordinate (p34) at ($(p3)+(p4)$);

  \node[label = right:{\tiny $1$}]  at (p1) {};
  \node[label = above:{\tiny $2$}]  at (p2) {};
  \node[label = left:{\tiny $3$}]  at (p3) {};
  \node[label = above:{\tiny $4$}]  at (p4) {};

  \node[label = right:{\tiny $12$}]  at (p12) {};
  \node[label = below:{\tiny $13$}]  at (p13) {};
  \node[label = right:{\tiny $14$}]  at (p14) {};
  \node[label = left:{\tiny $23$}]  at (p23) {};
  \node[label = above:{\tiny $24$}]  at (p24) {};
  \node[label = left:{\tiny $34$}]  at (p34) {};

  \draw[face] (p0) -- (p2) -- (p24) -- (p4) -- cycle;
  \draw[edge] (p0) -- (p24); 
  \draw[face] (p0) -- (p1) -- (p14) -- (p4) -- cycle;
  \draw[edge] (p0) -- (p14); 
  \draw[face] (p0) -- (p3) -- (p34) -- (p4) -- cycle;
  \draw[edge] (p0) -- (p34); 
  \draw[edge2] (p0) -- (p4); 

  \draw[edge3] ($0.4*(p1)$) -- ($(p4)+.4*(p1)$);

  \draw[face] (p0) -- (p1) -- (p12) -- (p2) -- cycle; 
  \draw[edge] (p0) -- (p12);
  \draw[edge2] (p0) -- (p1);

  \draw[face] (p0) -- (p1) -- (p13) -- (p3) -- cycle; 
  \draw[edge] (p0) -- (p13);

  \draw[face] (p0) -- (p2) -- (p23) -- (p3) -- cycle; 
  \draw[edge] (p0) -- (p23);
  \draw[edge2] (p0) -- (p2); 
  \draw[edge2] (p0) -- (p3); 

  \draw[edge3] ($0.7*(p1)+.3*(p3)$) -- ($(p1)+0.3*(p3)$); 
  \draw[edge3] ($0.7*(p1)+(p3)$) -- ($0.3*(p3)+.7*(p1)$); 
  \draw[edge3] ($0.4*(p1)$) -- ($(p2)+.4*(p1)$);
  \draw[edge3] ($0.7*(p1)+0.3*(p3)$) -- ($0.4*(p1)$);
  \draw[vertex] ($0.7*(p1)+0.3*(p3)$) circle (.6pt);
  \draw[vertex] ($0.4*(p1)$) circle (.6pt);

  \coordinate (trans) at (-6,-.4,0);
  \coordinate (q0) at ($(0,0,0)+(trans)$); 
  \coordinate (q1) at ($(-1,1,1)+(trans)$); 
  \coordinate (q2) at ($(1,-1,1)+(trans)$); 
  \coordinate (q3) at ($(1,1,-1)+(trans)$); 
  \coordinate (q4) at ($(-1,-1,-1)+(trans)$); 

  \coordinate (q12) at ($(q1)+(q2)-(trans)$);
  \coordinate (q13) at ($(q1)+(q3)-(trans)$);
  \coordinate (q14) at ($(q1)+(q4)-(trans)$);
  \coordinate (q23) at ($(q2)+(q3)-(trans)$);
  \coordinate (q24) at ($(q2)+(q4)-(trans)$);
  \coordinate (q34) at ($(q3)+(q4)-(trans)$);

  \node[label = right:{\tiny $1$}]  at (q1) {};
  \node[label = above:{\tiny $2$}]  at (q2) {};
  \node[label = left:{\tiny $3$}]  at (q3) {};
  \node[label = above:{\tiny $4$}]  at (q4) {};

  \node[label = right:{\tiny $12$}]  at (q12) {};
  \node[label = below:{\tiny $13$}]  at (q13) {};
  \node[label = right:{\tiny $14$}]  at (q14) {};
  \node[label = left:{\tiny $23$}]  at (q23) {};
  \node[label = above:{\tiny $24$}]  at (q24) {};
  \node[label = left:{\tiny $34$}]  at (q34) {};

  \draw[face] (q0) -- (q2) -- (q24) -- (q4) -- cycle;
  \draw[edge] (q0) -- (q24); 
  \draw[face] (q0) -- (q1) -- (q14) -- (q4) -- cycle;
  \draw[edge] (q0) -- (q14); 
  \draw[face] (q0) -- (q3) -- (q34) -- (q4) -- cycle;
  \draw[edge] (q0) -- (q34); 
  \draw[edge2] (q0) -- (q4); 

  \draw[edge3] ($0.7*(q24)+0.3*(trans)$) -- ($0.7*(q24)+0.3*(q2)$); 
  \draw[edge3] ($0.7*(q24)+0.3*(trans)$) -- ($0.7*(q24)+0.3*(q4)$);
  \draw[edge3] ($0.7*(q24)+0.3*(trans)$) -- (q0);
  \draw[vertex] ($0.7*(q24)+0.3*(trans)$) circle (.6pt);

  \draw[face] (q0) -- (q1) -- (q12) -- (q2) -- cycle; 
  \draw[edge] (q0) -- (q12);
  \draw[edge2] (q0) -- (q1);

  \draw[face] (q0) -- (q1) -- (q13) -- (q3) -- cycle; 
  \draw[edge] (q0) -- (q13);

  \draw[face] (q0) -- (q2) -- (q23) -- (q3) -- cycle; 
  \draw[edge] (q0) -- (q23);
  \draw[edge2] (q0) -- (q2); 
  \draw[edge2] (q0) -- (q3); 

  \draw[edge3] ($0.4*(q13)+0.6*(trans)$) -- ($0.4*(q13)+0.6*(q1)$); 
  \draw[edge3] ($0.4*(q13)+0.6*(trans)$) -- ($0.4*(q13)+0.6*(q3)$);
  \draw[edge3] (q0) -- ($0.4*(q13)+0.6*(trans)$); 
  \draw[vertex] ($0.4*(q13)+0.6*(trans)$) circle (.6pt);
  
\end{tikzpicture}
\caption{The combinatorial type of tropical lines parametrized by the $2$-dimensional cones spanned by rays $1$, $12$ and $12$, $34$ respectively in the moduli space  $L^{\trop}(\mathcal{X})$ in its coarse subdivision (see the left part of Figure \ref{fig-Bergman2} and Example \ref{ex-standardplaneR3}).}\label{fig-combtypes2}
\end{center}
\end{figure}
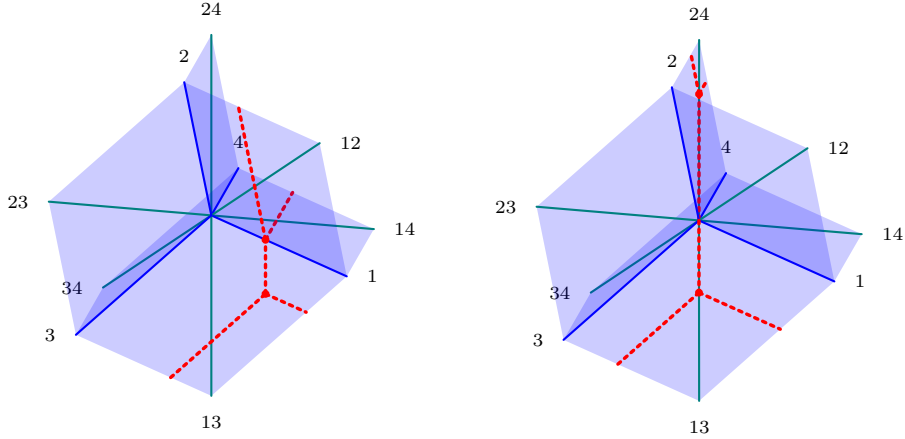
\end{example}

The next example uses Theorem~\ref{thm-main} to compute the matroid of the moduli space $L(X)$ in the case that $X$ is not a generic subspace. This example is a deformation of Example~\ref{ex-standardplaneR3}.

\begin{example}\label{ex:degenerate_planeR3} Let $\mathcal{X}'$ be the Bergman fan of the matroid $U_{2,3}\oplus U_{1,1}$, i.e., a degenerate plane equal to the standard tropical line in $\mathbb{R}^2$ with a $1$-dimensional lineality space attached (see the right part of Figure \ref{fig-U34}).
The matroid  $U_{2,3}\oplus U_{1,1}$ is 
(generically) realized by the matrix $W$ of Example \ref{ex:matroid_of_lines} (up to reordering the elements of the ground set). The Bergman fan $\mathcal{X}'$ is equal to the tropicalization of the rowspace $X$ of the matrix $W$. We computed the matroid of $L(X)$ in Example \ref{ex:matroid_of_lines}, which is illustrated in Figure \ref{fig:matroid_of_lines}. The link of its Bergman fan is depicted in the right part of Figure \ref{fig-Bergman2}.
It is worth mentioning that the underlying matroid of $L(X)$ is not simple and not a Dilworth truncation. 
 However, it is constant among all realizations $X$ of $U_{2,3}\oplus U_{1,1}$.
\end{example}

Our last example in this section studies the tropicalization of the moduli space $L(X)$ for a non-trivially valued subspace $X$.
 
\begin{example}\label{ex:valuated}
 Let $n=5$ and $d=2$. Consider the tropical linear space $\mathcal{X}''$ whose tropical Pl\"ucker coordinates are
 $\overline{q}_{125} = \overline{q}_{135} = 1$, $\overline{q}_{145}=3$, and the other seven coordinates are $0$.
 A projection of $\mathcal{X}''$ onto $\RR^3$ is visualized in Figure~\ref{fig:valuated}, although it does not reflect all its geometric aspects. 
 
 Suppose $X$ is a subspace that tropicalizes to $\mathcal X''$. In Example~\ref{ex-u36} we computed all the Pl\"ucker coordinates of the locus $L(X)$ in terms of the Pl\"ucker coordinates $q_I$ of $X$ (when $n\leq 6$), and gave a complete list of the ones that are not equal to monomials on the $q_I$. Since all terms appearing in this list involve a subset containing the element $6$, we conclude that, for $n\leq 5$, the Pl\"ucker coordinates of $L(X)$ are all monomials on the Pl\"ucker coordinates of $X$. In particular, this implies that the moduli space $\trop(L(X))$ is independent of the choice of lift $X$ of $\mathcal{X}''$. A concrete instance of a subspace $X$ that tropicalizes to $\mathcal{X}''$ is the span of the rows of the matrix
 \[
    W = \begin{pmatrix} 1&0&0&  1&1\\
                        0&1&0&  1&t\\
                        0&0&1&  1&t+t^3
    \end{pmatrix} \enspace .
 \]
 The link of the moduli space of tropical lines $\trop(L(X))$ 
 is  a $2$-dimensional polyhedral complex with $10$ vertices, $15$ rays, and $67$ maximal cones. 
 We restrict ourselves here to a brief description of the tropical lines that correspond to the vertices.
 The tropical plane $\mathcal{X}''$ has three vertices, say $a$, $b$, $c$, with three, four, and three rays attached, respectively. Moreover, it has two bounded edges, one between $a$ and $b$ and the other between $b$ and $c$.
 There is one tropical line contained in $\mathcal{X}''$ that has three vertices, and one that has a single vertex; both correspond to vertices of $\trop(L(X))$. 
 The one with three vertices (shown on the left of Figure~\ref{fig:valuated} in black) has the vertices $a$, $c$ and a third vertex in the $2$-dimensional cell containing $a$ and $c$. The position of this third vertex is fixed to be the component-wise maximum of $a$ and $c$. The single vertex of the other line is $b$, as illustrated on the left of Figure~\ref{fig:valuated} in red. The remaining eight vertices in $\trop(L(X))$ correspond to tropical lines with two vertices each. For two of these tropical lines, their bounded part is one of the two edges of $\mathcal{X}''$; see the right part of Figure~\ref{fig:valuated} in red for an example.
All the other six tropical lines have a vertex on a ray of $\mathcal{X}''$ and the other vertex is one of the three points $a$, $b$, $c$ to which the ray is not attached; Figure~\ref{fig:valuated} on the right shows an example in black. As before, the edge between these points is tropically convex, which determines the exact location of the point on the ray.
 
 To analyze this example we used the open source software \texttt{polymake} \cite{polymake} and the methods to compute tropical linear spaces which arise as valuations of linear spaces over Puiseux fractions; see \cite{JoswigLohoLorenzSchroeter:2016} and \cite{HampeJoswigSchroeter:2018} for details.
\end{example}

\begin{figure}[t]
    \centering
    \begin{tikzpicture}[
    	x  = {(0.5cm,.7cm)},
        y  = {(0cm,1cm)},
        z  = {(1cm,-.2cm)},
  edge/.style={line width=.8pt, line cap=round, blue},
  edge2/.style={line width=1.2pt, line cap=round, black, dotted},
  edge3/.style={line width=1.2pt, line cap=round, red, dotted},
  face/.style={draw=none, opacity=0.25, fill=blue},
  vertex/.style={draw = blue, fill = blue, line width=1.2pt},
  vertex2/.style={draw = black, fill = black, line width=1.2pt},
  vertex3/.style={draw = red, fill = red, line width=1.2pt},
  scale=1.7
  ]

  \coordinate (p1) at (0,0,0); 
  \coordinate (p2) at (-.9,-.3,-.9); 
  \coordinate (p0) at (-.3,-.3,-.3); 

  \coordinate (r0) at (0,1,0); 
  \coordinate (r1) at (1,-1,0); 
  \coordinate (r2) at (1,0,0); 
  \coordinate (r3) at (0,0,1); 
  \coordinate (r4) at (-1,0,-.5);
  
  \coordinate (l0) at (.3,-.9,-.3);
  \coordinate (l1) at (p2);
  \coordinate (l2) at ($(l0)+(r3)$);

  \coordinate (m0) at (p0);
  \coordinate (m1) at (p1);

  \draw[face] (p0) -- ($(p0)+(r1)$) -- ($(p0)+(r1)+(r2)$) -- ($(p0)+(r2)$) -- cycle;
  \draw[face] (p0) -- ($(p0)+(r1)$) -- ($(p0)+(r1)+(r3)$) -- ($(p0)+(r3)$) -- cycle;
  \draw[face] (p2) -- ($(p2)+(r0)$) -- ($(p2)+.7*(r0)+.7*(r1)$) -- ($(p2)+(r1)$) -- cycle;
  \draw[face] (p0) -- ($(p0)+(r1)$) -- ($(p2)+(r1)$) -- (p2) -- cycle;
  \draw[face] (p2) -- ($(p2)+(r4)$) -- ($(p2)+(r1)+(r4)$) -- ($(p2)+(r1)$) -- cycle;
  \draw[edge] (p0) -- ($(p0)+(r1)$);
  \draw[edge] (p2) -- ($(p2)+(r1)$);
        \draw[edge2] (l1) -- ($(l1)+(r1)$);
  \draw[edge] ($(p0)+(r2)$) -- (p0) -- ($(p0)+(r3)$);  
  \draw[edge] (p0) -- (p2) -- ($(p2)+(r4)$); 
        \draw[edge2] (l0) -- ($(l0)+(r2)$);
  \draw[face] (p0) -- ($(p0)+(r2)$)  -- ($(p1)+(r2)$) -- (p1) -- cycle;
  \draw[face] (p0) -- ($(p0)+(r3)$)  -- ($(p1)+(r3)$) -- (p1) -- cycle;
  \draw[face] (p0) -- ($(p0)+(r4)$) -- ($(p2)+(r4)$) -- (p2) -- cycle;
  \draw[face] (p1) -- ($(p1)+(r2)$) -- ($(p1)+(r2)+(r3)$) -- ($(p1)+(r3)$) -- cycle;
  \draw[face] (p0) -- ($(p0)+(r3)$) -- ($(p0)+(r4)$) -- cycle;
  \draw[face] (p0) -- ($(p0)+(r2)$) -- ($(p0)+1.2*(r2)+0.5*(r4)$) -- ($(p0)+(r4)$) -- cycle;
  \draw[face] (p1) -- ($(p1)+(r0)$) -- ($(p1)+(r0)+(r2)$) -- ($(p1)+(r2)$) -- cycle;
  \draw[face] (p1) -- ($(p1)+(r0)$) -- ($(p1)+(r0)+(r3)$) -- ($(p1)+(r3)$) -- cycle;
  \draw[face] (p0) -- (p1) -- ($(p1)+(r0)$) -- ($(p2)+(r0)$) -- (p2) -- cycle;
  \draw[face] (p2) -- ($(p2)+(r0)$) -- ($(p2)+(r0)+(r4)$) -- ($(p2)+(r4)$) -- cycle;
  
  \draw[edge] (p0) -- ($(p0)+(r4)$);
  \draw[edge] (p2) -- ($(p2)+(r4)$);
  \draw[edge] (p0) -- ($(p0)+(r4)$);
  \draw[edge] (p1) -- ($(p1)+(r2)$);
  \draw[edge] (p2) -- ($(p2)+(r4)$);
          \draw[edge3] (m0) -- ($(m0)+(r4)$);
  \draw[edge] (p1) -- ($(p1)+(r3)$);
  \draw[edge] ($(p2)+(r0)$) -- (p2) -- (p0) -- (p1) -- ($(p1)+(r0)$);

  \draw[edge3] (m1) -- ($(m1)+(r0)$);
  \draw[edge3] (m1) -- ($(m1)+(r3)$);
  \draw[edge3] (m1) -- ($(m1)+(r2)$);
  \draw[edge3] (m1) -- (m0); 
  \draw[edge3] (m0) -- ($(m0)+(r1)$);
 
  \draw[edge2] (l1) -- ($(l1)+(r4)$);
  \draw[edge2] (l1) -- ($(l1)+(r0)$);
  \draw[edge2] (l1) -- (l0) -- (l2); 
  
  \draw[vertex] ($(p0)$) circle (.6pt);
  \draw[vertex] ($(p1)$) circle (.6pt);
  \draw[vertex] ($(p2)$) circle (.6pt);

  \draw[vertex2] (l0) circle (.6pt);
  \draw[vertex2] (l1) circle (.6pt);

  \draw[vertex3] (m0) circle (.6pt);
  \draw[vertex3] (m1) circle (.6pt);

  \coordinate (trans) at (0,-.8,-4); 
  
  \coordinate (q1) at ($(0,0,0)+(trans)$); 
  \coordinate (q2) at ($(-.9,-.3,-.9)+(trans)$); 
  \coordinate (q0) at ($(-.3,-.3,-.3)+(trans)$); 

  \coordinate (L0) at ($(-.6,0,-.6)+(trans)$);
  \coordinate (L2) at ($(p1)+(trans)$);
  \coordinate (L1) at ($(p2)+(trans)$);

  \coordinate (M0) at (q0);

  \draw[face] (q0) -- ($(q0)+(r1)$) -- ($(q0)+(r1)+(r2)$) -- ($(q0)+(r2)$) -- cycle;
  \draw[face] (q0) -- ($(q0)+(r1)$) -- ($(q0)+(r1)+(r3)$) -- ($(q0)+(r3)$) -- cycle;
  \draw[face] (q2) -- ($(q2)+(r0)$) -- ($(q2)+.7*(r0)+.7*(r1)$) -- ($(q2)+(r1)$) -- cycle;
  \draw[face] (q0) -- ($(q0)+(r1)$) -- ($(q2)+(r1)$) -- (q2) -- cycle;
  \draw[face] (q2) -- ($(q2)+(r4)$) -- ($(q2)+(r1)+(r4)$) -- ($(q2)+(r1)$) -- cycle;
  \draw[edge] (q0) -- ($(q0)+(r1)$);
  \draw[edge] (q2) -- ($(q2)+(r1)$);
  \draw[edge] ($(q0)+(r2)$) -- (q0) -- ($(q0)+(r3)$);  
  \draw[edge] (q0) -- (q2) -- ($(q2)+(r4)$); 
        \draw[edge2] (L1) -- ($(L1)+(r1)$);
        \draw[edge3] (M0) -- ($(M0)+(r3)$);
        \draw[edge3] (M0) -- ($(M0)+(r2)$);
        \draw[edge3] (M0) -- ($(M0)+(r1)$);
  \draw[face] (q0) -- ($(q0)+(r2)$)  -- ($(q1)+(r2)$) -- (q1) -- cycle;
  \draw[face] (q0) -- ($(q0)+(r3)$)  -- ($(q1)+(r3)$) -- (q1) -- cycle;
  \draw[face] (q0) -- ($(q0)+(r4)$) -- ($(q2)+(r4)$) -- (q2) -- cycle;
  \draw[face] (q1) -- ($(q1)+(r2)$) -- ($(q1)+(r2)+(r3)$) -- ($(q1)+(r3)$) -- cycle;
  \draw[face] (q0) -- ($(q0)+(r3)$) -- ($(q0)+(r4)$) -- cycle;
  \draw[face] (q0) -- ($(q0)+(r2)$) -- ($(q0)+1.2*(r2)+0.5*(r4)$) -- ($(q0)+(r4)$) -- cycle;
  \draw[face] (q1) -- ($(q1)+(r0)$) -- ($(q1)+(r0)+(r2)$) -- ($(q1)+(r2)$) -- cycle;
  \draw[face] (q1) -- ($(q1)+(r0)$) -- ($(q1)+(r0)+(r3)$) -- ($(q1)+(r3)$) -- cycle;
  \draw[face] (q0) -- (q1) -- ($(q1)+(r0)$) -- ($(q2)+(r0)$) -- (q2) -- cycle;
        \draw[edge2] (L1) -- (L0) -- (L2);
        \draw[edge2] (L0) -- ($(L0)+0.8*(r0)$);
        \draw[edge3] (M0) -- ($(M0)+1.2*(r0)$);
  \draw[face] (q2) -- ($(q2)+(r0)$) -- ($(q2)+(r0)+(r4)$) -- ($(q2)+(r4)$) -- cycle;
  
  \draw[edge] (q0) -- ($(q0)+(r4)$);
  \draw[edge] (q2) -- ($(q2)+(r4)$);
  \draw[edge] (q0) -- ($(q0)+(r4)$);
  \draw[edge] (q1) -- ($(q1)+(r2)$);
  \draw[edge] (q2) -- ($(q2)+(r4)$);
  \draw[edge] (q1) -- ($(q1)+(r3)$);
  \draw[edge] ($(q2)+(r0)$) -- (q2) -- (q0) -- (q1) -- ($(q1)+(r0)$);

  \draw[edge3] (M0) -- ($(M0)+(r4)$);

  \draw[edge2] (L1) -- ($(L1)+(r4)$);
  \draw[edge2] (L2) -- ($(L2)+(r2)$);
  \draw[edge2] (L2) -- ($(L2)+(r3)$);
  
  \draw[vertex] ($(q0)$) circle (.6pt);
  \draw[vertex] ($(q1)$) circle (.6pt);
  \draw[vertex] ($(q2)$) circle (.6pt);

  \draw[vertex2] (L0) circle (.6pt);
  \draw[vertex2] (L1) circle (.6pt);
  \draw[vertex2] (L2) circle (.6pt);
  
  \draw[vertex3] (M0) circle (.6pt);

\end{tikzpicture}
    \caption{A projection of $\mathcal{X}''$ (with a self intersection) from Example~\ref{ex:valuated}, together with four tropical lines contained in it that correspond to vertices of the link of $\trop(L(X))$. }
    \label{fig:valuated}
\end{figure}
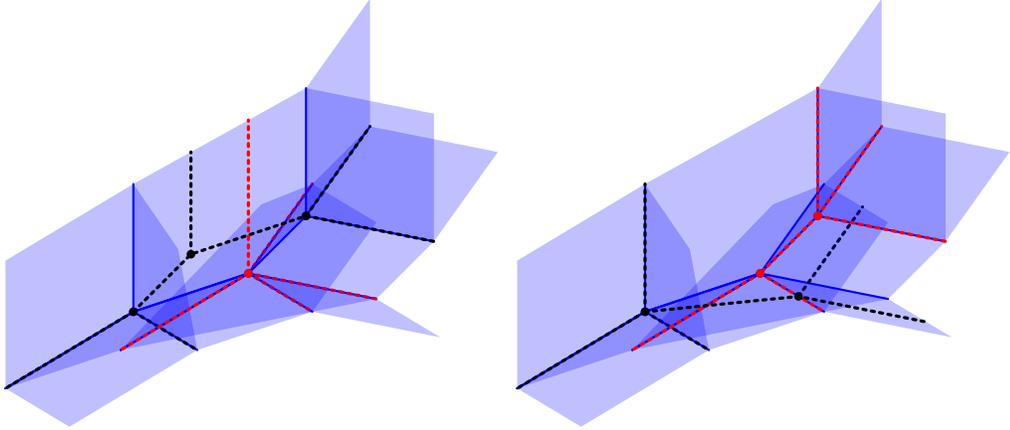
 
 \subsection{Background and connections}\label{sec-background}
 
In this subsection we briefly present additional motivation to study Question \ref{ques-linesinplanes} beyond the relative realizability problem, and we relate our study to other research topics in the literature.

 \subsubsection{Tropicalized moduli spaces of stable maps}
Tropical geometry has been successful in enumerative geometry, where an enumerative invariant is often treated as a $0$-dimensional Chow-cycle in a suitable moduli space parametrizing the objects to be counted. This fact, together with the close connection between tropical moduli spaces and their classical counterparts (which can for example be exploited to study topology \cite{CGP18} or compactifications \cite{Cap18}), triggered a large interest in tropical moduli spaces in the last years \cite{Mik06, GKM09, ACP15, CGP16, RSPW17, CMR16, Cha17, Gro16}. 

Often, moduli spaces of tropical curves of fixed genus $g$, possibly with $n$ marked points, $M^{\trop}_{g,n}$, are considered. 
If $g>0$, the moduli space $M^{\trop}_{g,n}$ can be given the structure of an abstract cone complex. For $g=0$, $M_{0,n}^{\trop}$ is a fan in a vector space, or, in other words, an embedded tropical variety. The embedding is obtained as a quotient of the Pl\"ucker embedding of the Grassmannian \cite{Tev07, GM07, SS04a}. 
It is equal to the Bergman fan of the matroid of the complete graph $K_{n-1}$ on $n-1$ vertices.

From the point of view of Gromov-Witten theory, it is most natural to study moduli spaces of stable maps into another variety $X$ next, since these are used to study the enumerative geometry of $X$. The case $g=0$, i.e.~the moduli space of stable rational maps, is most accessible; we restrict ourselves to this case for the purpose of this exposition. 
Consider the projective space $X=\mathbb{P}^n$. We may take the tropical (non-compact) model $\mathbb{R}^n$ and study moduli spaces of rational tropical stable maps of degree $d$ to $\mathbb{R}^n$, denoted by $M_{0,n}^{\trop}(\mathbb{R}^n,d)$. 
These spaces parametrize tuples $(\Gamma,x_1,\ldots,x_n,f)$, where $\Gamma$ is a metrized tree with some ends marked by $x_1,\ldots,x_n$, and $f \colon \Gamma \rightarrow \mathbb{R}^n$ is locally affine-linear such that $f(\Gamma)$ naturally obtains the structure of a tropical subvariety of $\mathbb{R}^n$ --- i.e., a weighted balanced graph.
The spaces $M_{0,n}^{\trop}(\mathbb{R}^n,d)$ are fans, and they are closely related to their classical counterparts \cite{GKM09, Ran17}.

The case of subvarieties of projective space is more challenging. The tropical analogue is a tropical subvariety $\mathcal{X}\subset \mathbb{R}^n$. Now realizability becomes an issue. Naively, we could let $M_{0,n}^{\trop}(\mathcal{X},d)$ parametrize $(\Gamma,x_1,\ldots,x_n,f)$ such that the tropical variety $f(\Gamma)$ is contained in $\mathcal{X}$.
But such a containment does not necessarily come from a tropicalization of a curve contained in a projective subvariety, so simply requiring containment is not sufficient from the purpose of realizability. In other words, there is a difference between moduli spaces of stable maps whose image is a tropical subvariety, and the tropicalized moduli space of stable maps.

Here, we care about realizability and thus focus on tropicalized moduli spaces of stable maps. Such tropicalized moduli spaces have been considered only for a few tractable situations, for example if $\mathcal{X}$ is a smooth rational curve \cite{GMO17}, or if $\mathcal{X}$ is a cubic surface \cite{GO17}. In the first case, $M_{0,n}^{\trop}(\mathcal{X},d)$ is a tropical variety (i.e., a weighted balanced polyhedral complex, but not a fan in general). The second moduli space is $0$-dimensional and the main task is to describe the tropical lines in the tropical cubic surface in question \cite{HJ17, CD19, PV19, PSS19, Gei19}.

With this paper, we add another class of tractable cases to the list: from the point of view of Gromov-Witten theory, the spaces $M^{\trop}_{0,n}(\mathcal{X},1)$ of stable maps of degree one for which $\mathcal{X}$ is a $2$-dimensional (realizable) tropical linear subspace can be treated with our approach.

\begin{corollary}
Let $\mathcal{X}$ be a realizable tropical plane in $\mathbb{R}^n$. The space $M^{\trop}_{0,0}(\mathcal{X},1)$ (viewed as the tropicalization of $M_{0,0}(X,1)$ for a generic lift $X$ of $\mathcal{X}$) equals the Bergman fan of relabeled Dilworth truncation $\widetilde{D}_{n-2}(U_{n,n})$ 
\end{corollary}

This corollary follows immediately from Corollary \ref{cor-DW}, as $M_{0,0}(X,1)$ is equal to the space of lines $L(X)$ in a generic plane $X$ whose tropicalization is $\mathcal{X}$.

\begin{example}\label{ex-m01}
Let $\mathcal{X}\subset \mathbb{R}^3$ be the Bergman fan of $U_{3,4}$, i.e., the standard tropical plane (see the left of Figure \ref{fig-U34}). Then $M^{\trop}_{0,0}(\mathcal{X},1)$ is the Bergman fan described in Example \ref{ex-standardplaneR3}. If we add one marked point and consider $M^{\trop}_{0,1}(\mathcal{X},1)$, we obtain a tropical line bundle over this Bergman fan, see Figure \ref{fig-linebundle}.
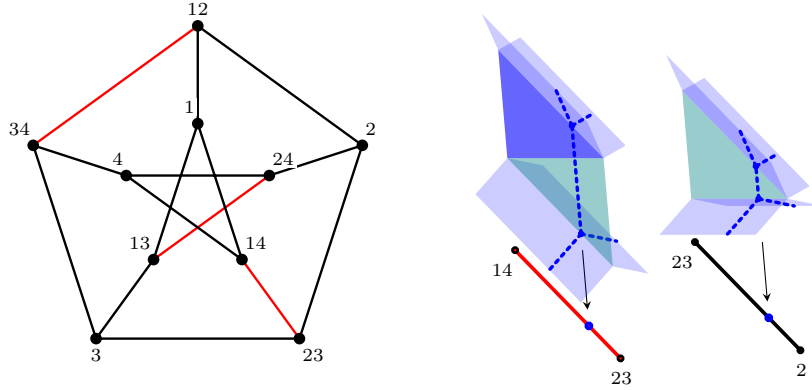
\begin{figure}
    \centering
    \begin{tikzpicture}[scale = 1,
                    edge/.style={black, line width=0.9pt, line cap=round},
                    edge2/.style={red, line width=0.9pt, line cap=round},
                    vertex/.style = {text=black, inner sep=2pt, white, draw=none}
                    ]

  \coordinate (p1) at (90:1); 
  \coordinate (p2) at (18:2.3); 
  \coordinate (p3) at (234:2.3); 
  \coordinate (p4) at (162:1); 

  \coordinate (p12) at (90:2.3); 
  \coordinate (p13) at (234:1); 
  \coordinate (p14) at (306:1); 
  \coordinate (p23) at (306:2.3); 
  \coordinate (p24) at (18:1); 
  \coordinate (p34) at (162:2.3);

  \draw[edge] (p34) -- (p3) -- (p23) -- (p2) -- (p12); 
  \draw[edge2] (p12) -- (p34); 
  \draw[edge2] (p13) -- (p24);
  \draw[edge] (p24) -- (p4) -- (p14) -- (p1) -- (p13); 
  \draw[edge] (p1) -- (p12); 
  \draw[edge] (p2) -- (p24); 
  \draw[edge] (p3) -- (p13); 
  \draw[edge] (p4) -- (p34); 
  \draw[edge2] (p14) -- (p23); 

  \fill (p1) circle [radius=2.2pt];
  \fill (p2) circle [radius=2.2pt];
  \fill (p3) circle [radius=2.2pt];
  \fill (p4) circle [radius=2.2pt];

  \fill (p12) circle [radius=2.2pt];
  \fill (p13) circle [radius=2.2pt];
  \fill (p14) circle [radius=2.2pt];
  \fill (p23) circle [radius=2.2pt];
  \fill (p24) circle [radius=2.2pt];
  \fill (p34) circle [radius=2.2pt];

  \node[above left, fill=white, inner sep=1pt] at ($(p1)+(0,0.1)$) {\tiny $1$};
  \node[above right, fill=white, inner sep=1pt] at ($(p2)+(0,0.1)$) {\tiny $2$};
  \node[below, fill=white, inner sep=1pt] at ($(p3)-(0,0.1)$) {\tiny $3$};
  \node[above left, fill=white, inner sep=1pt] at ($(p4)+(0,0.1)$) {\tiny $4$};

  \node[above, fill=white, inner sep=1pt] at ($(p12)+(0,0.1)$){\tiny $12$};
  \node[above left, fill=white, inner sep=1pt] at ($(p13)+(0,0.1)$){\tiny $13$}; 
  \node[above right, fill=white, inner sep=1pt] at ($(p14)+(0,0.1)$){\tiny $14$};
  \node[below right, fill=white, inner sep=1pt] at ($(p23)-(0,0.1)$){\tiny $23$};    
  \node[above right, fill=white, inner sep=1pt] at ($(p24)+(0,0.1)$){\tiny $24$};        
  \node[above left, fill=white, inner sep=1pt] at ($(p34)+(0,0.1)$){\tiny $34$};

 \begin{scope}[
        x  = {(0.076cm,-0.9)},
        y  = {(-1cm,0cm)},
        z  = {(.4cm,.6cm)},
  edge/.style={line width=.8pt, line cap=round, white},
  edge2/.style={line width=1.4pt, line cap=round, red},
  face/.style={draw=none, opacity=0.2, fill=blue},
  face2/.style={draw=none, opacity=0.4, fill=teal},
  face3/.style={draw=none, opacity=0.6, fill=blue},
  vertex/.style={draw = blue, line width=1.5pt},
  vertex2/.style={draw = black, line width=2pt},
  edge3/.style={line width=1.3pt, line cap=round, blue, dotted},
  scale=0.8
  ]
  \coordinate (trans) at (-.75,-6,0);
  \coordinate (x) at (0,-.8,0); 
  \coordinate (q0) at ($(0,0,0)+(trans)$); 
  \coordinate (q1) at ($(-1,1,1)+(trans)$); 
  \coordinate (q2) at ($(1,-1,1)+(trans)$); 
  \coordinate (q3) at ($(1,1,-1)+(trans)$); 
  \coordinate (q4) at ($(-1,-1,-1)+(trans)$); 

  \coordinate (q12) at ($(q1)+(q2)-(trans)$);
  \coordinate (q13) at ($(q1)+(q3)-(trans)$);
  \coordinate (q14) at ($(q1)+(q4)-(trans)$);
  \coordinate (q23) at ($(q2)+(q3)-(trans)$);
  \coordinate (q24) at ($(q2)+(q4)-(trans)$);
  \coordinate (q34) at ($(q3)+(q4)-(trans)$);

  \draw[edge2] ($(q0)-(x)+(1.7,0,0)$) -- ($(q23)+(x)+(1.7,0,0)$); 
  \draw[vertex2] ($(q0)-(x)+(1.7,0,0)$) circle (.6pt);
  \draw[vertex2] ($(q23)+(x)+(1.7,0,0)$) circle (.6pt);

  \draw[vertex2,blue] ($.7*(q23)+0.3*(q0)+0.4*(x)+(1.7,0,0)$) circle (.8pt);
  \draw[->,>=stealth] ($.7*(q23)+0.3*(q0)+0.4*(x)+(0.3,0,0)$) -- ($.7*(q23)+0.3*(q0)+0.4*(x)+(1.4,0,0)$);
  
  \draw[face] ($(q23)+(x)$) -- ($(q23)+0.4*(q2)-0.4*(trans)+(x)$) -- ($(q0)+0.4*(q2)-0.4*(trans)-(x)$) -- ($(q0)-(x)$) -- cycle;
            \draw[edge3] ($0.7*(q23)+0.3*(q0)+0.4*(x)$) -- ($0.7*(q23)+0.3*(q0)+0.4*(q2)-0.4*(trans)+0.4*(x)$); 
  \draw[face2] ($(q23)+(x)$) -- ($(q0)+(x)$) -- ($(q0)-(x)$) -- cycle;
  \draw[face3] ($(q0)+(x)$) -- ($(q14)-(x)$) -- ($(q0)-(x)$) -- cycle;
  \draw[face] ($(q23)+(x)$) -- ($(q23)+0.4*(q3)-0.4*(trans)+(x)$) -- ($(q0)+0.4*(q3)-0.4*(trans)-(x)$) -- ($(q0)-(x)$) -- cycle;
            \draw[edge3] ($0.7*(q23)+0.3*(q0)+0.4*(x)$) -- ($0.7*(q23)+0.3*(q0)+0.4*(q3)-0.4*(trans)+0.4*(x)$); 
  \draw[face] ($(q14)-(x)$) -- ($(q14)+0.7*(q4)-0.7*(trans)-(x)$) -- ($(q0)+0.7*(q4)-0.7*(trans)+(x)$) -- ($(q0)+(x)$) -- cycle; 
  \draw[face] ($(q14)-(x)$) -- ($(q14)+0.4*(q1)-0.4*(trans)-(x)$) -- ($(q0)+0.4*(q1)-0.4*(trans)+(x)$) -- ($(q0)+(x)$) -- cycle;
              \draw[edge3] ($0.7*(q0)+0.3*(q14)+0.4*(x)$) -- ($0.7*(q0)+0.3*(q14)+0.7*(q4)-0.7*(trans)+0.4*(x)$); 
              \draw[edge3] ($0.7*(q0)+0.3*(q14)+0.4*(x)$) -- ($0.7*(q0)+0.3*(q14)+0.4*(q1)-0.4*(trans)+0.4*(x)$); 
  

  \draw[edge3] ($0.7*(q23)+0.3*(q0)+0.4*(x)$) -- ($0.7*(q0)+0.3*(q14)+0.4*(x)$);

  \draw[vertex] ($0.7*(q23)+0.3*(q0)+0.4*(x)$) circle (.6pt);
  \draw[vertex] ($0.7*(q0)+0.3*(q14)+0.4*(x)$) circle (.6pt);
  
  \node[below, fill=white, inner sep=1pt] at ($(q23)+(x)+(1.9,0,0)$) {\tiny $23$}; 
  \node[below left, fill=white, inner sep=1pt] at ($(q0)-(x)+(1.9,0,0)$) {\tiny $14$};
  
\end{scope}

 \begin{scope}[
        x  = {(0.076cm,-0.9)},
        y  = {(-1cm,0cm)},
        z  = {(.4cm,.6cm)},
  edge/.style={line width=.8pt, line cap=round, white},
  edge2/.style={line width=1.4pt, line cap=round, black},
  face/.style={draw=none, opacity=0.2, fill=blue},
  face2/.style={draw=none, opacity=0.4, fill=teal},
  face3/.style={draw=none, opacity=0.7, fill=blue},
  vertex/.style={draw = blue, line width=1.5pt},
  vertex2/.style={draw = black, line width=2pt},
  edge3/.style={line width=1.3pt, line cap=round, blue, dotted},
  scale=.8
  ]

  \coordinate (trans) at (0,-9,0);
  
  \coordinate (x) at (0,-.8,0); 
  \coordinate (q0) at ($(0,0,0)+(trans)$); 
  \coordinate (q1) at ($(-1,1,1)+(trans)$); 
  \coordinate (q2) at ($(1,-1,1)+(trans)$); 
  \coordinate (q3) at ($(1,1,-1)+(trans)$); 
  \coordinate (q4) at ($(-1,-1,-1)+(trans)$); 

  \coordinate (q12) at ($(q1)+(q2)-(trans)$);
  \coordinate (q13) at ($(q1)+(q3)-(trans)$);
  \coordinate (q14) at ($(q1)+(q4)-(trans)$);
  \coordinate (q23) at ($(q2)+(q3)-(trans)$);
  \coordinate (q24) at ($(q2)+(q4)-(trans)$);
  \coordinate (q34) at ($(q3)+(q4)-(trans)$);

  \draw[edge2] ($(q0)-(x)+(.8,0,0)$) -- ($(q23)+(x)+(.8,0,0)$); 
  \draw[vertex2] ($(q0)-(x)+(.8,0,0)$) circle (.6pt);
  \draw[vertex2] ($(q23)+(x)+(.8,0,0)$) circle (.6pt);

  \draw[vertex2,blue] ($.7*(q23)+0.3*(q0)+0.4*(x)+(0.8,0,0)$) circle (.8pt);
  \draw[->,>=stealth] ($.7*(q23)+0.3*(q0)+0.4*(x)-(0.6,0,0)$) -- ($.7*(q23)+0.3*(q0)+0.4*(x)+(.5,0,0)$);
  
  \draw[face] ($(q0)+(x)$) -- ($(q0)+0.4*(q2)-0.4*(trans)+(x)$) -- ($(q0)+0.4*(q2)-0.4*(trans)-(x)$) -- ($(q0)-(x)$) -- cycle;
            \draw[edge3] ($(q0)+0.4*(x)$) -- ($(q0)+0.4*(q2)-0.4*(trans)+0.4*(x)$); 
  \draw[face2] ($(q0)+(x)$) -- ($(q14)-(x)$) -- ($(q0)-(x)$) -- cycle;
  \draw[face] ($(q0)+(x)$) -- ($(q0)+0.4*(q3)-0.4*(trans)+(x)$) -- ($(q0)+0.4*(q3)-0.4*(trans)-(x)$) -- ($(q0)-(x)$) -- cycle;
            \draw[edge3] ($(q0)+0.4*(x)$) -- ($(q0)+0.4*(q3)-0.4*(trans)+0.4*(x)$); 
  \draw[face] ($(q14)-(x)$) -- ($(q14)+0.7*(q4)-0.7*(trans)-(x)$) -- ($(q0)+0.7*(q4)-0.7*(trans)+(x)$) -- ($(q0)+(x)$) -- cycle; 
  \draw[face] ($(q14)-(x)$) -- ($(q14)+0.4*(q1)-0.4*(trans)-(x)$) -- ($(q0)+0.4*(q1)-0.4*(trans)+(x)$) -- ($(q0)+(x)$) -- cycle;
              \draw[edge3] ($0.7*(q0)+0.3*(q14)+0.4*(x)$) -- ($0.7*(q0)+0.3*(q14)+0.7*(q4)-0.7*(trans)+0.4*(x)$); 
              \draw[edge3] ($0.7*(q0)+0.3*(q14)+0.4*(x)$) -- ($0.7*(q0)+0.3*(q14)+0.4*(q1)-0.4*(trans)+0.4*(x)$);

  \draw[edge3] ($(q0)+0.4*(x)$) -- ($0.7*(q0)+0.3*(q14)+0.4*(x)$);

  \draw[vertex] ($(q0)+0.4*(x)$) circle (.6pt);
  \draw[vertex] ($0.7*(q0)+0.3*(q14)+0.4*(x)$) circle (.6pt);
  
  \node[below, fill=white, inner sep=1pt] at ($(q23)+(x)+(1,0,0)$) {\tiny $2$}; 
  \node[below left, fill=white, inner sep=1pt] at ($(q0)-(x)+(1,0,0)$) {\tiny $23$};
  
\end{scope}

\end{tikzpicture}
    \caption{The space $M^{\trop}_{0,1}(\mathcal{X},1)$ for $\mathcal X$ equal to the standard tropical plane in $\RR^3$ is a tropical line bundle over the Petersen graph. The three red edges parametrize lines passing through the apex of the plane~$\mathcal{X}$ (see Example~\ref{ex-standardplaneR3} and the right of Figure~\ref{fig-combtypes2}). Every point on such a line can be marked. The colors of the line bundle over the red edge indicates on which side the apex lies. }
    \label{fig-linebundle}
\end{figure}
\end{example}

\subsubsection{Tropicalized Fano schemes}
On the other hand, our main result also offers a new approach for the study of tropicalized Fano schemes, initiated in \cite{La18}. 
Fano schemes are moduli spaces parametrizing linear subspaces of algebraic varieties.
In \cite{La18}, the discrepancy between tropical Fano schemes (parametrizing not necessarily realizable tropical linear subspaces of a tropical variety) and tropicalized Fano schemes is discussed. The latter is in general strictly contained in the former.

Here, we contribute the study of tropicalized Fano schemes of linear subvarieties of dimension one higher, by describing them combinatorially.
In particular, we show that for generic $X$, the tropicalized Fano scheme is constant, and equal to the Bergman fan of a Dilworth truncation. With Theorem \ref{thm-main}, we offer a new perspective on tropicalized Fano schemes, and, in particular, on Example 3.4 in \cite{La18}; see our Example \ref{ex-genericity}.
 
\subsubsection{Tropicalized flag varieties}\label{subsec-flag}
Dressians are polyhedral fans that parametrize tropical linear spaces. They contain the tropicalized Grassmannians, which parametrize the realizable tropical linear spaces. Both objects offer a multitude of combinatorially intriguing research problems and results \cite{SS04a, HerrmannJensenJoswigSturmfels,FinkRincon,SchroeterThesis,OlartePanizzutSchroeter,SpeyerWilliams}. 

Flag varieties are subvarieties of products of Grassmannians, and it is natural to study their tropicalizations. Computationally, these problems are typically challenging. In \cite{BLMM}, the tropicalizations of the full flag varieties $\mathcal{F}_4$ (parametrizing flags of linear spaces $\{ P\subset L \subset H \subset \mathbb{P}^3\}$, where $P$ is a point, $L$ is a line, and $H$ is a plane) and $\mathcal{F}_5$ are computed. In \cite{BEZ}, Theorem 5.2.1 shows that the flag Dressian (i.e., the space parametrizing not necessarily realizable flags of tropical linear spaces) equals the tropicalized flag variety in these cases. 

As an example, the tropicalization of $\mathcal{F}_4$ (inside the torus) is a 6-dimensional fan in $\mathbb{R}^{11}$ with $78$ maximal cones (see Theorem 1 \cite{BLMM}).
With our study of tropicalized lines in planes, we offer a new perspective on this result, as we now explain. If no Pl\"ucker coordinates are allowed to be zero, all tropical planes in $\mathbb{R}^3$ are translations of the standard tropical plane $\mathcal X$ in $\mathbb{R}^3$, so they are uniquely determined by the position of their apex. Thus, inside the torus, the tropicalization of the flag variety parametrizing flags $\{ L \subset H \subset \mathbb{P}^3\}$ is exactly our generic tropicalized space of lines $L^{\trop}(\mathcal{X})$ times a $3$-dimensional lineality space. 
If we now want to add a point $P$ sitting on the line $L$ of the flag, we obtain, as in Example \ref{ex-m01}, a tropical line bundle over this space (see Figure \ref{fig-linebundle}). 
From this perspective, we can observe that we have a $6$-dimensional fan as the fan over the Petersen graph is two dimensional, the line bundle over it gives another dimension, and the lineality space which allows to translate the apex of $\trop(H)$ is three dimensional. This fan has $10\cdot 5+ 3\cdot 6=78$ maximal cones --- 5 cones over each of the black edges of the Petersen graph in Figure \ref{fig-linebundle} and 6 over each of the red edges. We expect that similar arguments can be used to compute and analyze further tropicalized flag varieties.

\section*{Appendix}\label{appendix}
\noindent This section is dedicated to the proof of Theorem~\ref{thm-incidenceimpliesPluecker} that each Pl\"ucker relation $R_{A,B}$ is contained in the saturation $(\mathcal{I}:\langle q_C \rangle^\infty)$.
The proof is somewhat technical, building on a double induction whose steps we spell out in the following lemmata. 

\begin{lemma}\label{lem-Equation1}
Let $A,B,C \subset [n]$ with $|A| = d-1$ and $|B| = |C| = d+1$, and suppose $a \in A\setminus B$.
For $i \in B\setminus A$ and $j\in C\setminus A$ with $j\neq i$, denote
\begin{align*}
\varphi_i &= \sgn{A}{B}{i} - \left(\sgn{A\setminus a \cup i}{C\cup a}{a}\right),\\
\psi_{j}& = \varphi_i + \sgn{A\setminus a \cup i}{C\cup a}{j} - \left(\sgn{A\setminus a\cup j}{B}{i}\right).
\end{align*} 
Then $(-1)^{\psi_j}$ is independent of $i$, and 	
\begin{align} \label{eq:in2pl}
\sum_{i\in B\setminus (A\setminus a)} (-1)^{\varphi_i} P_{B\setminus i} \cdot I_{A\setminus a \cup i, C\cup a} =
R_{A,B}\cdot q_C+\sum_{j\in C\setminus D}  (-1)^{\psi_{j}} 
R_{A\setminus a\cup j,B} \cdot q_{C\setminus j \cup a}.
\end{align}
\end{lemma}

\begin{proof}
First we show that $\psi_j$ modulo $2$ does not depend on $i$. 
To see this, notice that the summands of $\psi_j$ that depend on $i$ and do not cancel out 	are
\begin{align} \label{eq:noncancelingterms}
\Delta_j \coloneqq |A\cap [i]|-|A\setminus a \cup j \cap [i]|+ |A\setminus a \cup i \cap [j]| - |A\setminus a \cup i \cap [a]|.
\end{align}	
Assume first that $j<a$. 
We claim that
\begin{align} \label{eq:mod2psi}
\Delta_j  \equiv -|A\setminus a \cap \{j+1,\ldots,a\}| \ (\mathrm{mod}\ 2).
\end{align}
If $i$ is not between $j$ and $a$ then 
\begin{align*}
|A\setminus a \cup i \cap [j]| - |A\setminus a \cup i \cap [a]| = -|A\setminus a \cap \{j+1,\ldots,a\}|,
\end{align*} 
and if it is,
\begin{align*}
|A\setminus a \cup i \cap [j]| - |A\setminus a \cup i \cap [a]| = -|A\setminus a \cap \{j+1,\ldots,a\}| -1.
\end{align*}
Additionally, the first two summands in \eqref{eq:noncancelingterms} cancel if $i$ is not between $j$ and $a$ and 
contribute $-1$ if it is. 
Hence \eqref{eq:mod2psi} holds if $j < a$. 
The case $a < j$ is treated similarly, simply switching $a$ and $j$ in the argument.
	
It follows that for fixed $j$ the expression 
$ (-1)^{\psi_j} R_{A\setminus a\cup j,B} \cdot q_{C\setminus j \cup a}$ 
is well-defined. 
We can expand the Pl\"ucker relation and obtain a sum over all $i\in B\setminus (A\setminus a)$ with $i\neq j$. 
If, on the other hand, we expand for fixed $i\in B\setminus (A \setminus a)$ the polynomial 
$I_{D\setminus a \cup i, C\cup a}$ as a sum over all $j\in C\setminus D$, $j\neq i$, 
we deduce the claimed equality.
\end{proof}

\begin{lemma}\label{lem-rechnung1}
For $A, C \subset [n]$ with $|A| = d-1$ and $|C| = d+1$ we have $R_{A,C}\in (\mathcal{I}: \langle q_C \rangle ^\infty)$.
\end{lemma}

\begin{proof}
The proof is by induction on $|A\setminus C|$.
For the base case, suppose $|A\setminus C|=0$. 
Then $A\subset C$ and $R_{A,C}=0\in (\mathcal{I} : \langle q_C \rangle ^\infty)$.
The induction hypothesis is that $R_{A,C}\in (\mathcal{I}: \langle q_C \rangle ^\infty)$
for all $A$ with $|A\setminus C|<k$.
Now let $|A\setminus C|=k>0$. 
Then there exists $a\in A\setminus C$.
With that choice of $a$, Equation \eqref{eq:in2pl} with $B=C$ reads:
\begin{equation*}
\begin{aligned}
R_{A,C}\cdot q_C \ + \ 
 \sum_{j\in C\setminus A} (-1)^{\psi_j}\, R_{A\setminus a \cup j,C}\cdot q_{(C\cup j)\setminus a} \ = \
 \sum_{i\in C\setminus A} (-1)^{\varphi_i}\, P_{C\setminus i}\cdot I_{A\setminus a\cup i, C\cup a}.
\end{aligned}
\end{equation*}
The right-hand side is clearly in $(\mathcal{I} : \langle q_C \rangle ^\infty)$, as $\mathcal{I}$ is generated by the $I$'s. 
The sum over $j\in C\setminus A$ on the left hand side is also in 
$(\mathcal I: \langle Q_C \rangle ^\infty)$ by our induction hypothesis, 
since $|(A\setminus a\cup j)\setminus C|<|A\setminus C|=k$.
It follows that $R_{A,C}\in (\mathcal I: \langle Q_C \rangle ^\infty)$.
\end{proof}

\begin{lemma}\label{lem-rechnung2} Let $A, B, C \subset [n]$ with $|A| = d-1$, $|B| = |C| = d+1$, and $A\subset C$.
Let $b\in B\setminus C$ and $\beta = |[b]\cap B| + |[b]\cap C| + 1$. Furthermore, for $i \in B \setminus A$ and $j \in C \setminus B$, denote
$\varphi_i = |[i]\cap A|+|[i]\cap B|+|[b]\cap B\setminus i|+|[b]\cap C|$ and
$\psi_j = |[j]\cap B|+|[b]\cap B|+|[j]\cap C|+|[b]\cap C|$. Then
\begin{equation}\label{eq:moveB}
\begin{aligned}
R_{A,B}\cdot q_C + 
	\sum_{j\in C\setminus B} (-1)^{\psi_j}\, &R_{A,B\setminus b\cup j}\cdot q_{C\cup b\setminus j} = \\
&(-1)^\beta\, P_{B\setminus b}\cdot I_{A, C\cup b} + 
\sum_{\stackrel{i\in B\setminus A}{i\neq b}} (-1)^{\varphi_i}\, P_{A\cup i}\cdot 
I_{B\setminus i\setminus b, C\cup b}\enspace.
\end{aligned}
\end{equation}
\end{lemma}
\begin{proof}
Let $\nu_j = \beta + |[j]\cap A|+|[j]\cap (C\cup b)|$ and $\mu_{ij} = 
\varphi_i + |[j]\cap B\setminus i\setminus b|+|[j]\cap (C\cup b)|$.
Then for $i\neq b$ we have 
\begin{align}\label{eq:index}
\nu_i + \mu_{ii} \equiv 1+ 
|[i]\cap B|+|[i]\cap B\setminus i\setminus b|+|[b]\cap B|+|[b]\cap B\setminus i| \equiv 1  \ (\mathrm{mod}\ 2),
\end{align}
and if $i,j,b$ are distinct we also have
\begin{equation}\label{eq:index2}
\begin{aligned}
	\psi_j + |[i]\cap A| + |[i]\cap B\setminus b\cup j| &\equiv  \mu_{ij} && \ (\mathrm{mod}\ 2), &\text{ and }\\
	\psi_j + |[j]\cap A| + |[j]\cap B\setminus b\cup j| &\equiv  \nu_{j} && \ (\mathrm{mod}\ 2). 
\end{aligned}
\end{equation}
All three congruences are easily verified by distinguishing all six orderings of the variables $i$,$j$, and $b$.
	Furthermore, note that $b\in B\setminus C$ implies $b\not\in A$ as $A\subset C$.
Substituting the initial relations and distinguishing the two cases $j=b$ and $j\neq b$ for the index $j$ leads to the following equality:  
	\begin{align*}
		(-1)^\beta P_{B\setminus b}\, I_{A, C\cup b} \,&+
		\sum_{\stackrel{i\in B\setminus A}{i\neq b}} (-1)^{\varphi_i} P_{A\cup i}\, I_{B\setminus i\setminus b, C\cup b}
		& =\\ 
		P_{B\setminus b}
			\sum_{\stackrel{j\in C\setminus A}{\text{ or } j=b}} (-1)^{\nu_j} P_{A\cup j} \cdot q_{C\cup b \setminus j}
		\, &+ \,
		\sum_{\stackrel{i\in B\setminus A}{i\neq b}} P_{A\cup i} 
			\sum_{\stackrel{j\in C\setminus (B\setminus i)}{\text{ or } j=b}}(-1)^{\mu_{ij}} P_{B\setminus i \setminus b \cup j}\cdot q_{C\cup b\setminus j}
		&=\\
		(-1)^{|[b]\cap A|+|[b]\cap B|} P_{B\setminus b} P_{A\cup b} \cdot q_C &+
		\sum_{\stackrel{i\in B\setminus A}{i\neq b}} (-1)^{|[i]\cap A|+|[i]\cap B|} P_{A\cup i} P_{B\setminus i}\cdot q_{C} \,+\\
		P_{B\setminus b} \sum_{ j\in C\setminus A} (-1)^{\nu_j} P_{A\cup j} \cdot q_{C\cup b \setminus j}
		&+ \sum_{\stackrel{i\in B\setminus A}{i\neq b}} P_{A\cup i} 
			\sum_{j\in C\setminus (B\setminus i)} (-1)^{\mu_{ij}} P_{B\setminus i \setminus b \cup j}\cdot q_{C\cup b\setminus j} 
		\end{align*}
The first two terms sum to $R_{A,B}\cdot q_C$. Further changes of the summation order followed by a cancellation of terms using \eqref{eq:index} shows that the above equals:
		\begin{align*}
			\begin{array}{>{\displaystyle}c@{\hspace{0.1cm}}c@{\hspace{-.03cm}}>{\displaystyle}c@{\hspace{0.1cm}}c}
				R_{A,B}\cdot q_C  & + & 
		P_{B\setminus b} \!\!\sum_{ j\in C\setminus (A\cup B)}\!\! (-1)^{\nu_j} P_{A\cup j}\cdot q_{C\cup b \setminus j}
				& +\\
		\sum_{\stackrel{i\in B\setminus (A\cup C)}{i\neq b}}\!\! P_{A\cup i} \!\! 
			\sum_{ j\in C\setminus B } (-1)^{\mu_{ij}} P_{B\setminus i \setminus b \cup j} q_{C\cup b\setminus j}
				& + &
		\!\!\!\sum_{i\in (B\setminus A)\cap C} \!\! P_{A\cup i}\!\! 
			\sum_{ j\in C\setminus B } (-1)^{\mu_{ij}} P_{B\setminus i \setminus b \cup j} q_{C\cup b\setminus j} 
				& +\\
		P_{B\setminus b} \sum_{ j\in (C\setminus A)\cap B} (-1)^{\nu_j} P_{A\cup j}\cdot q_{C\cup b \setminus j}
				&+&
		\sum_{i\in (B\setminus A) \cap C} P_{A\cup i} (-1)^{\mu_{ii}} P_{B\setminus b}\cdot q_{C\cup b\setminus i}
			&\stackrel{\eqref{eq:index}}{=}\\
				R_{A,B}\cdot q_C  
				&+&
		P_{B\setminus b} \sum_{ j\in C\setminus (A\cup B)} (-1)^{\nu_j} P_{A\cup j}\cdot q_{C\cup b \setminus j}
				& +\\
		\sum_{ j\in C\setminus B } q_{C\cup b\setminus j} \!\!
			\sum_{\stackrel{i\in B\setminus (A\cup C)}{i\neq b}} (-1)^{\mu_{ij}} P_{A\cup i} P_{B\setminus i \setminus b \cup j}
				&+&
		\sum_{j\in C\setminus B} q_{C\cup b\setminus j} \!\!
			\sum_{ i\in (B\setminus A)\cap C } (-1)^{\mu_{ij}} P_{A\cup i} P_{B\setminus i \setminus b \cup j}&.
		\end{array}
		\end{align*}
	Combining the last two sums and then applying \eqref{eq:index2} leads to:
		\begin{align*}
			R_{A,B} q_C &+
		\sum_{ j\in C\setminus (A\cup B)} (-1)^{\nu_j} q_{C\cup b \setminus j}\,P_{A\cup j} P_{B\setminus b}
		+
		\sum_{j\in C\setminus B} q_{C\cup b\setminus j}
			\sum_{\stackrel{i\in B\setminus A}{i\neq b}} (-1)^{\mu_{ij}} P_{A\cup i} P_{B\setminus i \setminus b \cup j} & \!\! 
		\stackrel{\eqref{eq:index2}}{=}\\
			R_{A,B} q_C &+
		\sum_{j\in C\setminus B} q_{C\cup b\setminus j}
			\sum_{\stackrel{i\in (B\cup j)\setminus A}{i\neq b}} (-1)^{\psi_j + |[i]\cap A| + |[i]\cap B\setminus b\cup j|} P_{A\cup i} P_{B\setminus i \setminus b \cup j}\!\! &=\\
			R_{A,B}q_C &+
		\sum_{j\in C\setminus B} (-1)^{\psi_j} R_{A,B\setminus b\cup j} \, q_{C\cup b\setminus j}.
		\end{align*}
\end{proof}

Now we put everything together.

\begin{proof}[Proof of Theorem~\ref{thm-incidenceimpliesPluecker}]
The proof is by induction on $|A\cap C|$. 
For the induction base case, assume $|A\cap C|=d-1$, i.e., $A\subset C$. 

We now use another induction on $|B\cap C|$. 
For the second induction base case, assume $|B\cap C|=d+1$, i.e., $B=C$. 
Then the claim follows from Lemma \ref{lem-rechnung1}.
The second induction hypothesis is that 
$R_{A,B}\in (\mathcal I : \langle q_C \rangle ^\infty)$ for all $A\subset C$ and $B$ with $|B\cap C|>k$.
Now let $A\subset C$ and $B$ such that $|B\cap C|=k<d+1$. 
Then there exists $b\in B\setminus C$ and we can apply Lemma \ref{lem-rechnung2}. 
The right hand side of Equation \eqref{eq:moveB} is in the saturation $(\mathcal I : \langle q_C \rangle ^\infty)$. 
The sum over $j\in C\setminus B$ is in $(\mathcal I : \langle q_C \rangle ^\infty)$ by our hypothesis, 
since $|(B\setminus b\cup j)\cap C|>|B\cap C|=k$. 
It follows that $R_{A,B}\in (\mathcal I : \langle q_C \rangle ^\infty)$. 
Thus the inductive step for the second induction is complete, which proves the base case for the first induction.

The induction assumption is that for all $A$ and $B$ such that $|A\cap C|> k$ we have 
$R_{A,B}\in (\mathcal I : \langle q_C \rangle ^\infty)$.
Now let $A$ and $B$ be such that $|A\cap C|= k<d-1$. 
Then there exists $a\in A\setminus C$. 
We apply Lemma \ref{lem-Equation1}. 
The left hand side of Equation (\ref{eq:in2pl}) is in the ideal $(\mathcal I : \langle q_C \rangle ^\infty)$. 
The sum over $j\in C\setminus A$ on the right hand side is also in 
$(\mathcal I : \langle q_C \rangle ^\infty)$, since $|(A\setminus a \cup j)\cap C|>|A\cap C|=k$.
It follows that $R_{A,B}\in (\mathcal I : \langle q_C \rangle ^\infty)$.	
\end{proof}

\bibliographystyle {alpha}
\bibliography {JMRS}

\end{document}